% Main setup

\documentclass{amsproc}
\usepackage{amsfonts}
% Theorem environments

\theoremstyle{thmit} % Numbered and Italic
\newtheorem{thm}{Theorem}[section]
\newtheorem{lem}[thm]{Lemma}
\newtheorem{cor}[thm]{Corollary}
\newtheorem{prop}[thm]{Proposition}

\theoremstyle{thmrm} % Numbered and Roman

% Roman and not numbered

%\newtheorem*{oldproof}{Proof}
%\renewenvironment{proof}[1][{}]{\begin{oldproof}[#1]}{\qed\end{oldproof}}

%
%\documentclass[12pt]{amsproc}
%\newtheorem{theorem}{\bf Theorem}[section]
%\newtheorem{lemma}[theorem]{\bf Lemma}
%\newtheorem{proposition}[theorem]{\bf Proposition}
%\newtheorem{corollary}[theorem]{\bf Corollary}
%\newtheorem{question}[theorem]{\bf Question}
%\newtheorem{remark}[theorem]{\sc Remark}
%\newtheorem{hypothesis}[theorem]{\bf Hypothesis}
%\newtheorem{problem}[theorem]{\bf Problem}
%\newtheorem{conjecture}[theorem]{\bf Conjecture}
%\newtheorem{point}[theorem]{}
\newtheorem{definition}[thm]{\bf Definition}
\newcommand{\defect}{\mathrm{def\,}}

\newcommand{\h}{\mathrm{ht}}

\newcommand{\frat}{\mathrm{Frat}}% \DeclareMathOperator{\frat}{Frat}
\newcommand{\GL}{GL}% \DeclareMathOperator{\GL}{GL}
 \newcommand{\la}{\lambda}

\usepackage{graphicx,tikz}
\usepackage{float}
%\usepackage[color,notcite,notref]{showkeys}
%\definecolor{labelkey}{rgb}{1,0,0}

%\newcommand{\comment}[1]{{\color{blue}\rule[-0.5ex]{2pt}{2.5ex}}\marginpar{\small\begin{flushleft}\color{blue}#1\end{flushleft}}}

\title[Rank of a verbal subgroup]{On the rank of a verbal subgroup of a finite group}
\author{Eloisa Detomi}
\address{ELOISA DETOMI\\Dipartimento di Ingegneria dell'Informazione\\  Universit\`a di Padova\\ Via G. Gradenigo 6/B \\35121 Padova, Italy\\
{eloisa.detomi@unipd.it}}
\author{Marta Morigi}
\address{MARTA MORIGI\\Dipartimento di Matematica\\ Universit\`a di Bologna\\
Piazza di Porta San Donato 5 \\ 40126 Bologna, Italy\\
{marta.morigi@unibo.it}}
\author{Pavel Shumyatsky}
\address{PAVEL SHUMYATSKY\\Department of Mathematics \\University of Brasilia\\
Brasilia-DF \\ 70910-900 Brazil \\
{pavel@unb.br}}

\thanks{
The first and second authors are members of GNSAGA (Indam). The third author was partially  supported by FAPDF and CNPq.}
\keywords{Verbal Subgroup, Multilinear Commutator Words, Rank} 
\subjclass{20D20, 20F12} 

\begin{document}
\maketitle

\begin{abstract} We show that if $w$ is a multilinear commutator word and $G$ a finite group in which every metanilpotent subgroup generated by $w$-values is of rank at most $r$, then the rank of the verbal subgroup $w(G)$ is bounded in terms of $r$ and $w$ only. In the case where $G$ is soluble we obtain a better result -- if $G$ is a finite soluble group in which every nilpotent subgroup generated by $w$-values is of rank at most $r$, then the rank of $w(G)$ is at most $r+1$. 
\end{abstract}

\section{Introduction}
Guralnick \cite{gur} and Lucchini \cite{dp} independently proved that if all Sylow subgroups of a finite group  $G$ can be generated by $d$ elements, then the group $G$ itself can be generated by $d+1$ elements. This was an improvement over an earlier result due to Longobardi and Maj \cite{lm} giving the bound $2d$.

It follows that if all nilpotent subgroups of a finite group $G$ have rank at most $r$, then the group $G$ has rank at most $r+1$. Here, the rank of a finite group is the minimum number $r$ such that every subgroup can be generated by $r$ elements.

In the present paper we are concerned with the question whether the rank of a verbal subgroup $w(G)$ can be bounded in terms of ranks of nilpotent subgroups generated by $w$-values. Recall that, given a group $G$ and a group-word $w$, the verbal subgroup $w(G)$ is the one generated by the set $G_w$ of all $w$-values in $G$. In general, elements of $w(G)$ are not $w$-values but there are many results showing that the set $G_w$ has strong influence on the structure of $G$ (see for example \cite{Segal}). Thus, the main theme in this article is as follows.
\medskip

{\sc Question.} {\it Let $w$ be a group-word and $G$ a finite group in which every nilpotent subgroup generated by $w$-values has rank at most $r$. Is the rank of the verbal subgroup $w(G)$ bounded in terms of $r$ and $w$ only?}
\medskip

In the case where $w=x^n$ is the power word for $n\geq2$ the answer to the above question is negative. Indeed, let $G=FK$ be a Frobenius group with kernel $F$ and cyclic complement $K$ satisfying  the conditions that $F^n=1$ and $K^n=K$. It is straightforward that $G^n=G$ while any nilpotent subgroup generated by $n$th powers is cyclic. However $F$ can be chosen of arbitrarily large rank. Hence the rank of $G$ cannot be bounded in terms of $n$.

In view of this example we will focus on commutator words. Similar issues with respect to the exponent of a verbal subgroup of a finite group were addressed in \cite{S} where in particular it was proved that if $w$ is a multilinear commutator word and $G$ a finite group in which the exponent of any nilpotent subgroup generated by $w$-values divides some fixed number $e$, then the exponent of $w(G)$ is bounded in terms of $e$ and $w$ only. Later this was extended in \cite{DMS14} to some words that are not necessarily multilinear. We recall that a finite group $G$ is said to have exponent $e$ if $e$ is the least positive number such that $g^e=1$ for each $g\in G$.  
 Multilinear commutators words, also known as outer commutator words, are obtained by nesting commutators, but using always different indeterminates. For example, the word $[[x_1,x_2],[x_3,x_4,x_5],x_6]$ is a multilinear commutator word. 
%Examples of multilinear commutators include the familiar lower central words $\gamma_n(x_1,\dots,x_n)=[x_1,\dots,x_n]$ and derived words $\delta_n$, on $2^n$ variables, defined recursively by
%$$\delta_0=x_1,\qquad \delta_n=[\delta_{n-1}(x_1,\ldots,x_{2^{n-1}}),\delta_{n-1}(x_{2^{n-1}+1},\ldots,x_{2^n})].$$

The question on the rank of a verbal subgroup is more complex than that on the exponent. The main catch is that the condition that every nilpotent subgroup generated by $w$-values has rank at most $r$ may fail in homomorphic images of $G$. The corresponding condition on the exponent was shown in \cite{S} to survive under homomorphisms.

In this paper we find a way around that obstacle in the case of soluble groups. Our first result is as follows.

\begin{thm}\label{thm:sol}
Let $w$ be a multilinear commutator word and $G$ a soluble or simple  finite group  in which every nilpotent subgroup generated by $w$-values has rank at most $r$. Then the rank of the verbal subgroup $w(G)$ is at most $r+1$.
\end{thm}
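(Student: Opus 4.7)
My plan is to reduce the theorem, via the Guralnick--Lucchini bound \cite{gur,dp}, to showing that every Sylow subgroup of $w(G)$ has rank at most $r$. Guralnick--Lucchini gives $\mathrm{rank}(H)\le 1+\max_{p}\mathrm{rank}(P)$ for any finite group $H$, with $P$ ranging over Sylow $p$-subgroups of $H$. Applied to $H=w(G)$, the desired bound $\mathrm{rank}(w(G))\le r+1$ follows as soon as every Sylow subgroup of $w(G)$ has rank at most $r$. Since any Sylow subgroup is nilpotent, the hypothesis delivers this the moment we know each Sylow subgroup of $w(G)$ is generated by $w$-values of $G$.

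For the simple case, I may assume $G$ is nonabelian finite simple (if $G$ is cyclic of prime order then $w(G)=1$ and there is nothing to prove). The Ore conjecture, now a theorem of Liebeck--O'Brien--Shalev--Tiep, together with its extensions to multilinear commutator words, states that every element of $G$ is a $w$-value. Hence $w(G)=G$ and every subgroup, in particular every Sylow subgroup, is contained in $G_w$ and so is trivially generated by $w$-values. The hypothesis then gives rank at most $r$ for each Sylow subgroup, and the reduction above finishes this case.

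For the soluble case, I aim to establish the following Key Claim: for every prime $p$ and every Sylow $p$-subgroup $P$ of $G$, $P\cap w(G)=\langle G_{w}\cap P\rangle$. Because $w(G)$ is characteristic (hence normal) in $G$, its Sylow $p$-subgroups are exactly the intersections $P\cap w(G)$, so the claim says every Sylow subgroup of $w(G)$ is generated by $w$-values. The inclusion $\supseteq$ is immediate. For $\subseteq$, the plan is to fix a Hall $p'$-complement $H$, so that $G=PH$, and to exploit the outer form of $w$: any variable of $w$ can be split along the Hall decomposition into a $p$-part and a $p'$-part, and the multilinearity of $w$ lets one extract the $p$-part of a $w$-value as a product of $w$-values lying in $P$. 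Hall and Sylow theory in soluble groups are what make this manipulation possible, together with the identity $w(w(G))=w(G)$ which allows replacement of $G$ by $w(G)$ when iterating.

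The main obstacle is precisely the Key Claim. As stressed in the introduction, the hypothesis on nilpotent subgroups generated by $w$-values is not inherited by quotients $G/N$, so the natural induction on $|G|$ by modding out breaks down. One must therefore work directly inside $G$, relying on the outer structure of $w$ and the existence of Hall subgroups for every prime, rather than passing to smaller groups. Once the Key Claim is in place, the theorem follows immediately from the reduction to Sylow subgroups and the Guralnick--Lucchini bound, yielding the sharp value $r+1$.
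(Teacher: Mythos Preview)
Your overall architecture matches the paper's exactly: reduce via Guralnick--Lucchini to bounding the rank of each Sylow subgroup of $w(G)$, dispatch the simple case with Ore, and in the soluble case reduce everything to the Key Claim that $P\cap w(G)=\langle G_w\cap P\rangle$. This Key Claim is precisely Proposition~\ref{prop:focal}, and once it is in hand the deduction of the theorem is the same one-line argument as in the paper.

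The gap is that your sketch for the Key Claim does not work as written. First, the ``identity $w(w(G))=w(G)$'' you invoke is false for every nontrivial multilinear commutator word: already for $w=[x,y]$ it asserts $G''=G'$. Second, the Hall-decomposition idea is too optimistic: writing each entry as $g_i=a_ih_i$ with $a_i\in P$ and $h_i\in H$, the multilinear expansion of $w(a_1h_1,\ldots,a_nh_n)$ yields products of \emph{conjugates} of $w$-values with mixed entries from $P$ and $H$, and there is no mechanism that forces the $p$-part of such a product to be a product of $w$-values lying in $P$. Third, your stated reason for avoiding induction is misplaced: the Key Claim is a statement about \emph{all} finite soluble groups and carries no rank hypothesis, so passing to quotients is perfectly legitimate there. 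The paper proves Proposition~\ref{prop:focal} by exactly such a minimal-counterexample argument, the real inputs being that a finite soluble group is generated by a commutator-closed set of elements of prime-power order (Lemma~\ref{sol-gen}), that for such a generating set $X$ one has $w(G)=\langle X_w\rangle$ (Proposition~\ref{goodgen}), and a small abelian normal verbal subgroup at the bottom (Lemma~\ref{cor:ab}) to anchor the induction.
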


Theorem \ref{thm:sol} is almost a straightforward consequence of the verification of the Ore conjecture \cite{lost} and the following proposition, which is a generalization of the Focal Subgroup Theorem to  multilinear commutator words in soluble finite groups. This might be of independent interest (see Section \ref{sec:focal}).

\begin{prop}\label{prop:focal}
Let $w$ be a multilinear commutator word and $G$ a finite soluble group. Let $P$ be a Sylow $p$-subgroup of $G$. Then $P \cap w(G)$ can be generated by $w$-values lying in $P$. 
\end{prop}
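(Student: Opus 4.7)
\emph{Proof plan.} I would argue by induction on $|G|$. Let $K$ denote the subgroup of $P$ generated by all $w$-values of $G$ that lie in $P$; the inclusion $K \leq P \cap w(G)$ is immediate, and the aim is the reverse.

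Let $N$ be a minimal normal subgroup of $G$; by solubility, $N$ is elementary abelian of exponent $q$ for some prime $q$. The crucial technical step is a \emph{lifting lemma}: every $w$-value in $PN/N$ is the image of some $w$-value in $P$. When $q = p$, one has $N \leq P$ and lifts exist simply by taking preimages in $P$. When $q \neq p$, $P \cap N = 1$ (so $PN$ is a semidirect product by Schur--Zassenhaus), and every $x \in PN$ factors uniquely as $x = pn$ with $p \in P$ and $n \in N$; the multilinearity of $w$ then yields $w(p_1 n_1, \ldots, p_k n_k) \equiv w(p_1, \ldots, p_k) \pmod{N}$, and $w(p_1, \ldots, p_k) \in P$ provides the desired lift.

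Applying the inductive hypothesis to $G/N$, whose Sylow $p$-subgroup is $PN/N$, together with the lifting lemma, yields $(PN/N) \cap w(G/N) = KN/N$. Pulling back to $G$ and invoking Dedekind's modular law gives $P \cap w(G) \leq KN \cap P = K \cdot (N \cap P)$. If $q \neq p$, then $N \cap P = 1$ and hence $P \cap w(G) = K$. If $q = p$, then $N \leq P$ and one obtains $P \cap w(G) = K \cdot (w(G) \cap N)$; by minimality of $N$, the factor $w(G) \cap N$ is either $1$ (so $P \cap w(G) = K$ and we are done) or equals $N$, reducing the entire argument to the following residual task: if $N$ is a minimal normal $p$-subgroup of $G$ contained in $w(G)$, show that $N \leq K$.

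This residual case is the main obstacle. A natural first attempt uses that $R := \langle w\text{-values lying in } N \rangle$ is a $G$-invariant subgroup of the irreducible $\mathbb{F}_p[G]$-module $N$, hence $R \in \{1, N\}$ by minimality; if $R = N$ then $N \leq K$ and we are done. Eliminating the degenerate possibility $R = 1$ should rely essentially on solubility: after a reduction to $O_{p'}(G) = 1$, one has $F(G) = O_p(G) \leq P$ self-centralizing, so $G/F(G)$ acts faithfully on $F(G)$ and in particular on $N$. I anticipate that this structural constraint, together with $N \leq w(G)$ and a transfer-type argument adapting the classical focal subgroup theorem to the multilinear commutator word $w$ (for instance via the abelian quotient $P/(KP')$), should rule out $R = 1$ and complete the induction.
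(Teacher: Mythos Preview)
Your inductive setup is natural, and the case $q=p$ of your lifting step is fine: if $N\le P$ then any $w$-value of $G/N$ lying in $P/N$ is the image of a $w$-value of $G$ already lying in $P$. However, there are two genuine gaps.

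First, your proof of the lifting lemma for $q\neq p$ does not work. A $w$-value of $G/N$ lying in $PN/N$ has the form $w(g_1,\ldots,g_k)N$ with $g_i\in G$ and $w(g_1,\ldots,g_k)\in PN$; there is no reason for the \emph{arguments} $g_i$ themselves to lie in $PN$, so your factorisation $g_i=p_in_i$ with $p_i\in P$, $n_i\in N$ is unavailable. What you actually need is that every $w$-value of $G$ landing in $PN$ is congruent modulo $N$ to a $w$-value of $G$ landing in $P$, and this is essentially as hard as the proposition itself.

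Second, and more seriously, your ``residual case'' is precisely the heart of the matter, and you leave it unproved. You correctly isolate the question of whether the minimal normal $p$-subgroup $N\le w(G)$ contains a nontrivial $w$-value, but the hoped-for ``transfer-type argument adapting the classical focal subgroup theorem'' is not supplied, and it is not clear how to produce one for a general multilinear commutator word.

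The paper's argument circumvents both obstacles by a different mechanism. It starts from the fact (Lemma~\ref{sol-gen}) that a finite soluble group is generated by a commutator-closed set $X$ of elements of prime-power order; then every element of $X_w$ has prime-power order as well, and the normal set $Y_{w,p}$ of $G$-conjugates of the $p$-elements in $X_w$ behaves well in quotients via Lemma~\ref{p-set}. This cleanly yields the reduction to $O_{p'}(G)=1$. For the remaining step the paper does not attack a minimal normal subgroup directly: instead it uses the auxiliary verbal subgroups $w(\mathbf i)=w(G^{(i_1)},\ldots,G^{(i_n)})$ and Lemma~\ref{cor:ab} to find a nontrivial \emph{abelian} normal subgroup $w(\mathbf i)\le w(G)$, which by Proposition~\ref{goodgen} is generated by $w_{\mathbf i}$-values from $X$. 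Since $O_{p'}(G)=1$, this subgroup is a $p$-group and its generators are $w$-values that are $p$-elements, hence lie in $Y_{w,p}$; this supplies the missing generators and closes the induction.
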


It remains unknown whether the above proposition can be extended to arbitrary finite groups. The main result of \cite{AFS} says that if $w$ is a multilinear commutator word and $G$ is a finite group, then $P\cap w(G)$ can be generated by powers of $w$-values. We also mention that  for derived words the proposition was established in \cite{dSSh}.

In the general case, that is, where $G$ is not assumed to be either soluble or simple, we were able to answer only a weaker version of the main question. Namely, the condition on the rank is imposed on the metanilpotent subgroups generated by $w$-values rather than the nilpotent ones.
\begin{thm}\label{thm:metanilp}
Let $w$ be a multilinear commutator word and $G$ a finite  group  in which every metanilpotent subgroup generated by $w$-values has rank at most $r$. Then the rank of the verbal subgroup $w(G)$ is bounded in terms of $r$ and $w$ only. 
\end{thm}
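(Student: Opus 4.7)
The plan is to bound the rank of $w(G)$ via the Guralnick--Lucchini theorem recalled in the introduction: it suffices to bound, for each prime $p$, the rank of a Sylow $p$-subgroup $P$ of $w(G)$ in terms of $r$ and $w$. The two main inputs will be Theorem \ref{thm:sol} (applied to suitable soluble sections) and the Ore conjecture (to control simple sections). The metanilpotent hypothesis, rather than the nilpotent one, is exactly what allows one to descend through a nilpotent quotient: if $N$ is a normal nilpotent subgroup of $G$ and $\overline H \le G/N$ is a nilpotent subgroup generated by $w$-values, then lifting each generator to a $w$-value in $G$ yields a subgroup $H\le G$ generated by $w$-values for which $H\cap N\le N$ is nilpotent and $H/(H\cap N)$ is nilpotent; hence $H$ is metanilpotent, so the hypothesis forces $\mathrm{rk}\,\overline H \le \mathrm{rk}\,H \le r$. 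In particular, the hypothesis of Theorem \ref{thm:metanilp} on $G$ descends to the hypothesis of Theorem \ref{thm:sol} on $G/N$.

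Let $R=R(G)$ be the soluble radical and $\overline G=G/R$. I would first apply Theorem \ref{thm:sol} to $R$: the metanilpotent hypothesis restricted to $R$ contains the nilpotent one, so $\mathrm{rk}\,w(R)\le r+1$. To upgrade this to a bound on $\mathrm{rk}(R\cap w(G))$ one uses Proposition \ref{prop:focal}, which guarantees that every Sylow subgroup of $w(R)$ is generated by $w$-values lying in $R$; a focal-subgroup/transfer argument then compares a Sylow $p$-subgroup of $R\cap w(G)$ with the corresponding one of $w(R)$ and bounds the former in terms of $r$. For the quotient $\overline G$, whose soluble radical is trivial, the socle is a direct product $T_1\times\cdots\times T_k$ of non-abelian finite simple groups, and $\overline G$ embeds into $\prod_i\mathrm{Aut}(T_i)\wr S_k$. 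By the Ore conjecture (and its extension to multilinear commutator words) every element of each $T_i$ is a $w$-value, so one may pick a non-trivial $w$-value $\bar t_i\in T_i$ from each factor; the $\bar t_i$ commute pairwise and generate an abelian subgroup of rank $k$ in $\overline G$. Applying the descent principle with $N=F(G)\le R$ forces $k\le r$. A similar descent inside a single factor, combined with the simple case of Theorem \ref{thm:sol}, bounds $\mathrm{rk}\,T_i\le r+1$; since a non-abelian finite simple group of bounded rank has bounded order (CFSG), $|\mathrm{Aut}(T_i)|$ is also bounded. Altogether this bounds $\mathrm{rk}\,\overline G$ and hence the image of $P$ in $\overline G$, in terms of $r$ and $w$.

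The principal obstacle is the bridging step from $\mathrm{rk}\,w(R)$ to $\mathrm{rk}(R\cap w(G))$: in general $w(R)$ is strictly smaller than $R\cap w(G)$, so Theorem \ref{thm:sol} alone does not suffice. Proposition \ref{prop:focal}, available only in the soluble setting, is what makes the transfer at the Sylow level possible, and its unavailability in the general case is precisely why one has to strengthen the hypothesis from ``every nilpotent subgroup generated by $w$-values'' to ``every metanilpotent subgroup generated by $w$-values'' in Theorem \ref{thm:metanilp}.
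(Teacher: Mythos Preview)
Your approach has two genuine gaps.

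First, the bridging step from $\mathrm{rk}\,w(R)$ to $\mathrm{rk}(R\cap w(G))$ is never actually carried out. Proposition~\ref{prop:focal} is stated and proved only for \emph{soluble} $G$; it tells you that a Sylow $p$-subgroup of $w(R)$ is generated by $w$-values lying in $R$, but it gives no control over $R\cap w(G)$, which can be strictly larger than $w(R)$ when $G$ is not soluble. You explicitly call this the ``principal obstacle'' and then assert that Proposition~\ref{prop:focal} ``makes the transfer at the Sylow level possible'' without saying how; no such transfer is available here. Second, the claim that a nonabelian finite simple group of bounded rank has bounded order is false: $\mathrm{PSL}_2(p)$, for $p$ prime, has rank bounded by an absolute constant (every proper subgroup is cyclic, dihedral, a Borel subgroup $C_p\rtimes C_{(p-1)/2}$, or one of $A_4,S_4,A_5$), yet its order goes to infinity with $p$. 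So bounding $\mathrm{rk}\,T_i$ does not bound $|\mathrm{Aut}(T_i)|$, and your bound on $\mathrm{rk}\,\overline G$ collapses as written.

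The paper sidesteps both problems by a different decomposition. Instead of splitting at the soluble radical, it first bounds the nonsoluble length $\lambda(G)$ (Lemma~\ref{cor:non-sol-l}, via Corollary~\ref{cor:sol-fit}) and argues by induction on $\lambda$. At the inductive step one takes $N=T_\lambda(G)$, which is perfect with $\lambda(N)=1$, and a supplement $H$ with $G=HN$ and $H\cap N\le\frat(H)$; then $\lambda(H)\le\lambda-1$ and induction bounds $\mathrm{rk}\,w(H)$, while $w(G)/(w(G)\cap N)\cong w(H)N/N$. The subgroup $N$ itself is handled by Lemma~\ref{lem:l=1}, which reduces via iterated Frattini supplements to the case $G/\frat(G)$ a direct product of nonabelian simple groups (Lemma~\ref{lem:simple}); there the Ore conjecture bounds Sylow ranks of $G/\frat(G)$, the bounded rank of Schur multipliers controls the central part, and the Lubotzky--Mann theorem transfers a rank bound from $K/Z(K)$ to $K'$. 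No bound on the orders of the simple factors is ever needed, and the problematic quantity $R\cap w(G)$ is never isolated. Your descent observation---that passing to $G/N$ for $N$ nilpotent normal converts the metanilpotent hypothesis into the nilpotent one---is correct and pleasant, but it does not by itself bridge the gap you identified.
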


Recall that a group $G$ is metanilpotent if there is a normal subgroup $N$ such that $N$ and $G/N$ are both nilpotent. Obviously, Theorem \ref{thm:metanilp} furnishes an evidence that, for multilinear commutator words, the answer to our question should be positive. We do not know for which other words results of similar nature can be obtained. In particular, it would be interesting to see if Theorems \ref{thm:sol} and \ref{thm:metanilp} remain valid with $w$ being an Engel word.

\section{Multilinear commutator words}\label{sec:mcw}

We  recall that multilinear commutator words are recursively defined as follows:  the group-word $w(x)=x$ in one indeterminate is a multilinear commutator; if    $\alpha$ and $\beta$ are  multilinear commutators involving disjoint sets of indeterminates, then the word $w=[\alpha, \beta]$  
%$u$ and $v$ are  multilinear commutators involving disjoint sets of indeterminates then the word $w=[u,v]$ 
is a multilinear commutator, and all multilinear commutators are obtained in this way. Examples of multilinear commutators include the familiar lower central words $\gamma_n(x_1,\dots,x_n)=[x_1,\dots,x_n]$ and derived words $\delta_n$, on $2^n$ variables, defined recursively by
$$\delta_0=x_1,\qquad \delta_n=[\delta_{n-1}(x_1,\ldots,x_{2^{n-1}}),\delta_{n-1}(x_{2^{n-1}+1},\ldots,x_{2^n})].$$
Of course $\delta_n(G)=G^{(n)}$ is the $n$th derived group of a group $G$.

Let $X$ be a subset of a group and $w=w(x_1,\dots,x_n)$ a multilinear commutator word. We set
$$X_w=\{w(y_1,\dots,y_n) \mid y_i\in X\}.$$ Say that the subset $X$ is commutator-closed if $[x,y]\in X$ whenever $x,y\in X$.

If $G=\langle X \rangle$ is a group generated by a commutator-closed set $X$, then the commutator subgroup $G'$ is generated by commutators $[x,y]$, where $x,y \in X$ (see  \cite[Lemma 2.2]{dSSh}). Here we will generalize this result to multilinear commutators words. 

\begin{prop}\label{goodgen}
Let $G=\langle X \rangle$ be a group generated by a commu\-tator-closed set $X$ and let $w$
be a multilinear commutator word. Then $w(G)=\langle X_w \rangle$. 
\end{prop}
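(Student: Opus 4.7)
The plan is to proceed by induction on the weight of $w$. The case $w=x$ is trivial, since then $w(G)=G=\langle X\rangle=\langle X_w\rangle$, and the case $w=[x_1,x_2]$ is exactly Lemma~2.2 of \cite{dSSh}, whose proof shows normality of $\langle [X,X]\rangle$ in $G$ via the identity $[a,b]^x=[a,b]\cdot[[a,b],x]$ together with the observation that both factors lie in $[X,X]$ by commutator-closedness. This template will drive the inductive step.

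For the inductive step, write $w=[\alpha,\beta]$ with $\alpha,\beta$ multilinear commutators of smaller weight. By induction, $\alpha(G)=\langle X_\alpha\rangle$ and $\beta(G)=\langle X_\beta\rangle$, and both are normal in $G$ as verbal subgroups. Consequently $w(G)=[\alpha(G),\beta(G)]$ is the normal closure in $G$ of $X_w=[X_\alpha,X_\beta]$, so it suffices to prove that $\langle X_w\rangle$ is already normal in $G$, i.e.\ that $u^x\in\langle X_w\rangle$ for every $u\in X_w$ and $x\in X$. Writing $u=[p,q]$ with $p\in X_\alpha$ and $q\in X_\beta$, iterated commutator-closedness places both $u$ and $[u,x]$ in $X$, so the identity $u^x=u[u,x]$ reduces the task to showing $[u,x]\in\langle X_w\rangle$. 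The idea is to expand $u^x=[p^x,q^x]$: from $a^x=a[a,x]$ for $a\in X$ (with $[a,x]\in X$) together with the multilinearity of $\alpha$ and $\beta$, successive applications of $[ab,c]=[a,c]^b[b,c]$ write $p^x$ and $q^x$ as products of $X_\alpha$- and $X_\beta$-values conjugated by $X$-elements; expanding the outer commutator then displays $u^x$ as a product of conjugates of $w$-values of $X$-elements.

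The main obstacle I anticipate is that the conjugators appearing here a priori lie in $G$, which threatens to make the argument circular. I plan to handle this by a secondary induction on the length of the conjugator as a word in the alphabet $X$: for length one, the identity $t^y=t[t,y]$ for $t\in X_w$ and $y\in X$ reduces the claim to $[t,y]\in\langle X_w\rangle$. When $\alpha$ or $\beta$ has weight one, commutator-closedness of $X$ places $[t,y]$ directly in $X_w$, exactly as in the base case; in the general case one re-applies the same expansion to $[t,y]$, and a careful matching of its outer commutator structure against that of $w$---again using commutator-closedness to reinterpret parts of $t$ and $y$ as legitimate $\alpha$- and $\beta$-values---exhibits each factor in the expansion as an element of $X_w$. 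This closes the secondary induction and yields the normality of $\langle X_w\rangle$ in $G$.
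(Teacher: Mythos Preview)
Your argument has a genuine gap at the heart of the inductive step. First, the claim that ``when $\alpha$ or $\beta$ has weight one, commutator-closedness of $X$ places $[t,y]$ directly in $X_w$'' is false in general. Take $w=[\delta_2,x_5]$, so $\beta=x_5$ has weight one, and let $t=[[[a,b],[c,d]],e]\in X_w$ with $a,b,c,d,e\in X$. For $[t,y]$ to lie in $X_w$ one would need $t\in X_{\delta_2}$, i.e.\ $t=[[a',b'],[c',d']]$ for some $a',b',c',d'\in X$; but that forces $e$ itself to be a commutator of two $X$-elements, which need not hold. Your claim is correct for left-normed words $\gamma_n$, where the extra $y$ can be absorbed by shifting the bracketing, but it does not extend to arbitrary $\alpha$.

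Second, and more seriously, the ``secondary induction on the length of the conjugator'' is circular. The length-one case is the entire content (longer conjugators reduce to it immediately), and your plan for length one is to expand $t^y=[p^y,q^y]$ via identities of the type $[ab,c]=[a,c]^b[b,c]$. This does produce a product of elements $[p_i,q_j]\in X_w$, but each appears conjugated by single $X$-elements (or short products of them), so you are back to exactly the length-one problem with no complexity measure having decreased. The paper breaks this circularity with a different induction: for fixed height $h$ one inducts on the \emph{defect} of $w$ (its distance from $\delta_h$). The key external input is Theorem~\ref{thm:implicit} (extracted from \cite{FM}), which guarantees that at least one of $[w(G),\alpha(G)]$ and $[w(G),\beta(G)]$ is contained in the product of the verbal subgroups $\varphi(G)$ over all proper height-$h$ extensions $\varphi$ of $w$. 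Those $\varphi$ have strictly smaller defect, so by induction $\varphi(G)=\langle X_\varphi\rangle\subseteq\langle X_w\rangle$; this containment is precisely what absorbs the troublesome conjugators (via Lemma~\ref{T}) and is the missing idea in your outline.
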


To prove Proposition \ref{goodgen} we will need the concepts of {\it height} and {\it defect} of a multilinear commutator word, introduced in \cite{FM}. For the reader's convenience, we will now describe some results from \cite{FM}.

%\begin{definition}
%The set of multilinear commutator words, and the height and the labelled tree of a multilinear commutator word,
%are defined recursively as follows:
%\begin{enumerate}
%\item
%An indeterminate is a multilinear commutator word of height $0$, and its tree is an isolated vertex,
%labelled with the name of the indeterminate.
%\item
%If $\alpha$ and $\beta$ are disjoint multilinear commutator words, then also $w=[\alpha,\beta]$
%is a multilinear commutator word.
%The height $\h(w)$ of the word $w$ is taken to be the maximum of the heights of $\alpha$
%and $\beta$ plus $1$, and the tree of $w$ is obtained by adding a new vertex with label $w$
%and connecting it to the vertices labelled $\alpha$ and $\beta$ of the corresponding trees of these words.
%\end{enumerate}
%\end{definition}
\begin{definition}
The height and the labelled tree of a multilinear commutator word 
are defined recursively as follows:
\begin{enumerate}
\item
%An indeterminate has height $0$, and its tree is an isolated vertex,
%labelled with the name of the indeterminate.
An indeterminate has height $0$, and its tree is an isolated vertex,
labelled with the name of the indeterminate.
\item
If $w=[\alpha,\beta]$, where $\alpha$ and $\beta$ are disjoint multilinear commutator words,  
 then %is a multilinear commutator word. The 
 the height $\h(w)$ of the word $w$ is taken to be the maximum of the heights of $\alpha$
and $\beta$ plus $1$, and the tree of $w$ is obtained by adding a new vertex with label $w$
and connecting it to the vertices labelled $\alpha$ and $\beta$ of the corresponding trees of these words.
\end{enumerate}
\end{definition}

The tree of a multilinear commutator word $w$ provides a visual way of reading how $w$
is constructed by nesting commutators, easier than writing the actual expression of $w$ by
using commutator brackets.
We draw these trees by going downwards whenever we form a new commutator, so that the vertex
with label $w$ is placed at the root of the tree.
Every vertex $v$ is labelled with a multilinear commutator word, which we denote by $w_v$.
Note that the indeterminates correspond exactly to the vertices of degree $1$.
Also, the height of $w$ coincides with the height of the tree, that is, the largest distance
from the root to another vertex of the tree (which will be necessarily labelled by an indeterminate). For example, the following are the trees for the words $\gamma_4$ and $\delta_3$:
\begin{figure}[H]
\centering
\begin{tikzpicture}[level distance=5mm]
\tikzstyle{level 1}=[sibling distance=10mm]
\tikzstyle{level 2}=[sibling distance=10mm]
\tikzstyle{level 3}=[sibling distance=10mm]
\coordinate (root)[grow=up, fill] circle (2pt)
child {[fill] circle (2pt)}
child {[fill] circle (2pt)
       child {[fill] circle (2pt)}
       child {[fill] circle (2pt)
               child {[fill] circle (2pt)}
               child {[fill] circle (2pt)}
             }
      };
\node[below=2pt] at (root) {\scriptsize $\gamma_4$};
\node[left=4pt] at (root-2) {\scriptsize $[x_1,x_2,x_3]$};
\node[right=2pt] at (root-1) {\scriptsize $x_4$};
\node[left=2pt] at (root-2-2) {\scriptsize $[x_1,x_2]$};
\node[right=2pt] at (root-2-1) {\scriptsize $x_3$};
\node[above=2pt] at (root-2-2-2) {\scriptsize $x_1$};
\node[above=2pt] at (root-2-2-1) {\scriptsize $x_2$};
\end{tikzpicture}
\qquad
\begin{tikzpicture}[level distance=5mm]
\tikzstyle{level 1}=[sibling distance=26mm]
\tikzstyle{level 2}=[sibling distance=15mm]
\tikzstyle{level 3}=[sibling distance=5mm]
\coordinate (root)[grow=up, fill] circle (2pt)
child {[fill] circle (2pt)
       child {[fill] circle (2pt)
               child {[fill] circle (2pt)}
               child {[fill] circle (2pt)}
             }
       child {[fill] circle (2pt)
               child {[fill] circle (2pt)}
               child {[fill] circle (2pt)}
             }
      }
child {[fill] circle (2pt)
       child {[fill] circle (2pt)
               child {[fill] circle (2pt)}
               child {[fill] circle (2pt)}
             }
       child {[fill] circle (2pt)
               child {[fill] circle (2pt)}
               child {[fill] circle (2pt)}
             }
      };
\node[below=2pt] at (root) {\scriptsize $\delta_3$};
\node[left=4pt] at (root-2) {\scriptsize $[[x_1,x_2],[x_3,x_4]]$};
\node[right=3pt] at (root-1) {\scriptsize $[[x_5,x_6],[x_7,x_8]]$};
\node[left=2pt] at (root-2-2) {\scriptsize $[x_1,x_2]$};
\node[left=3pt] at (root-2-1) {\scriptsize $[x_3,x_4]$};
\node[right=3pt] at (root-1-2) {\scriptsize $[x_5,x_6]$};
\node[right=2pt] at (root-1-1) {\scriptsize $[x_7,x_8]$};
\node[above=2pt] at (root-2-2-2) {\scriptsize $x_1$};
\node[above=2pt] at (root-2-2-1) {\scriptsize $x_2$};
\node[above=2pt] at (root-2-1-2) {\scriptsize $x_3$};
\node[above=2pt] at (root-2-1-1) {\scriptsize $x_4$};
\node[above=2pt] at (root-1-2-2) {\scriptsize $x_5$};
\node[above=2pt] at (root-1-2-1) {\scriptsize $x_6$};
\node[above=2pt] at (root-1-1-2) {\scriptsize $x_7$};
\node[above=2pt] at (root-1-1-1) {\scriptsize $x_8$};
\end{tikzpicture}
\caption{The trees of the words $\gamma_4$ and $\delta_3$.}
\end{figure}
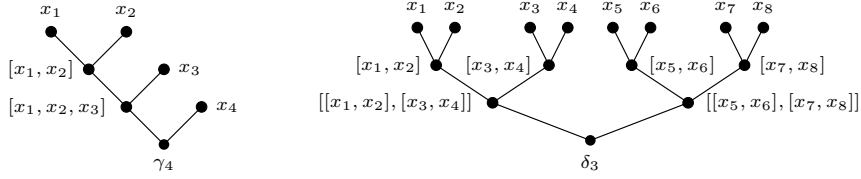
More generally, the full tree of height $h$ corresponds to the derived word $\delta_h$.

All labels of the tree of a multilinear commutator word are  determined,
up to permuting the indeterminates, by the tree itself (as a graph without labels): given the tree,
we only need to associate an indeterminate to every vertex of degree $1$, and then proceed
downwards by labeling each vertex with the commutator of the labels of its immediate
ascendants.

\begin{definition}
Let $v$ be a vertex of the tree of a multilinear commutator word $w$ of height $h$.
We say that $v$ is in the {\em $i$-th level} of the tree if it lies at distance $h-i$
from the root of the tree.
\end{definition}

Thus the upmost level will be level $0$ and the root will be at level $h$, but note
that a vertex $v$ at level $i$ is not necessarily labelled with a word $w_v$ of
height $i$, it might even happen that $w_v$ is an indeterminate.

It is also useful to associate a {\em companion} vertex to each vertex of the tree
different from the root, defined as follows.

\begin{definition}
Let $p$ be a vertex of the tree of a multilinear commutator word $w$, different from
the root, and let $u$ be the immediate descendant of $p$.
Then the {\em companion} of $p$ is the only other vertex $q$ of the tree which has $u$
as an immediate descendant.
\end{definition}

It is clear that companion vertices lie on the same level of the tree.

\medskip

We will prove Proposition \ref{goodgen} for a general multilinear commutator word
$w$ by induction on the `distance' of $w$ to the closest derived word.
We make this notion of distance precise in the following definition.

\begin{definition}
Let $w$ be a multilinear commutator word of height $h$.
Then the {\em defect\/} of $w$, which is denoted by $\defect w$, is defined as
\[
\defect w= 2^{h+1} - 1 - V,
\]
where $V$ is the number of vertices of the tree of $w$.
\end{definition}

So, if the height of $w$ is $h$, then the defect is the number of vertices that need to be
added to the tree of $w$ in order to get the tree of $\delta_h$.
Thus the defect is $0$ if and only if $w$ is a derived word, and we have $\defect\gamma_4=8$
and $\defect[\gamma_3,\gamma_3]=4$.

\begin{definition}
Let $T$ be the tree associated to a multilinear commutator word $w$.
A subset $S$ of vertices of $T$ is called a {\em section\/} of $T$ if $S$ is maximal (with
respect to inclusion) subject to the condition that $S$ does not contain two vertices which
are one a descendant of the other.
Equivalently, in terms of labels, this means that every indeterminate involved in $w$
appears in exactly one word $w_v$ with $v\in S$.
\end{definition}

A very natural way of obtaining a section is by cutting a tree below level $i$, that is,
we consider the section $S$ containing all vertices at level $i+1$ and all the vertices
of the tree lying below level $i+1$ labelled by an indeterminate.
This is the type of section that we will use in the proof of Proposition \ref{goodgen}.
\begin{figure}[H]
\centering
\begin{tikzpicture}[level distance=5mm]
\tikzstyle{level 1}=[sibling distance=26mm]
\tikzstyle{level 2}=[sibling distance=13mm]
\tikzstyle{level 3}=[sibling distance=7mm]
\tikzstyle{level 4}=[sibling distance=4mm]
\coordinate (root)[grow=up, fill] circle (2pt)
child {[fill] circle (2pt)
       child {[fill] circle (2pt)}
       child {[fill] circle (2pt)
               child {[fill] circle (2pt)}
               child {[fill] circle (2pt)
                      child {[fill] circle (2pt)}
                      child {[fill] circle (2pt)}
                     }
             }
      }
child {[fill] circle (2pt)
       child {[fill] circle (2pt)}
       child {[fill] circle (2pt)
               child {[fill] circle (2pt)}
               child {[fill] circle (2pt)
                      child {[fill] circle (2pt)}
                      child {[fill] circle (2pt)}
                     }
             }
      };
\node[below=2pt] at (root) {\scriptsize $[\gamma_4,\gamma_4]$};
\node[transparent] (a) at (root-2-2-2) {\scriptsize aa}
node[transparent] (b) at (root-2-2-1) {\scriptsize a}
node[transparent] (c) at (root-2-1) {\scriptsize a}
node[transparent] (d) at (root-1-2-2) {\scriptsize a}
node[transparent] (e) at (root-1-2-1) {\scriptsize a}
node[transparent] (f) at (root-1-1) {\scriptsize aa};
\draw[dashed,rounded corners] (a.west) -- (a.north) -- (b.north) -- (c.north)
-- (d.north) -- (e.north) -- (f.north) -- (f.east) -- (f.south) -- (e.south)
-- (d.south) -- (c.south) -- (b.south) -- (a.south) -- cycle;
\node[right=3mm] at (root-1-1) {\scriptsize $S$};
\end{tikzpicture}
%\caption{\label{section}Section of $[\gamma_4,\gamma_4]$ by cutting below level $0$.}
\end{figure}

\begin{lem}
\label{generalized 3 subgroup lemma}\cite[Lemma 3,2]{FM}
Let $w$ be a multilinear commutator word, and let $T$ be the tree of $w$.
If $\eta$ is another multilinear commutator word, then for every $v\in T$, we define $\pi^{(v)}$ to be
the word whose tree is obtained by replacing the tree of $w_v$ at vertex $v$ with the
tree of $[w_v,\eta]$.
Then, for every section $S$ of $T$, and for every group $G$, we have
\[
[w(G),\eta(G)]
\le
\prod_{v\in S} \, \pi^{(v)}(G).
\]
\end{lem}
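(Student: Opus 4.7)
The plan is to prove the inclusion by induction on $|S|$, starting from the section $S=\{v_{\mathrm{root}}\}$ and working downward by splitting a vertex into its two children. In the base case $|S|=1$, the only possibility is $S=\{v_{\mathrm{root}}\}$; here $\pi^{(v_{\mathrm{root}})}$ is obtained by replacing the whole tree of $w$ at the root with the tree of $[w,\eta]$, so $\pi^{(v_{\mathrm{root}})}=[w,\eta]$. A short argument using the fact that conjugates of values of multilinear commutator words are again values of the same word identifies the verbal subgroup $[w,\eta](G)$ with the subgroup commutator $[w(G),\eta(G)]$, and the statement reduces to an equality.

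For the inductive step with $|S|\ge 2$, I would first observe combinatorially that $S$ must contain a pair of sibling vertices: choosing $u\in S$ of minimum level (deepest in the tree), the sibling of $u$ cannot contain any element of $S$ among its descendants by this minimality, and so it must itself lie in $S$ by maximality of $S$. Calling this sibling pair $v_1,v_2$ and their common parent $v$, the set $S^-=(S\setminus\{v_1,v_2\})\cup\{v\}$ is a smaller section, so by induction the lemma holds for $S^-$. Thus the whole argument reduces to proving the single key inclusion
\[
\pi^{(v)}(G)\le \pi^{(v_1)}(G)\cdot\pi^{(v_2)}(G).
\]

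To establish this, I would set $A=w_{v_1}(G)$, $B=w_{v_2}(G)$, $C=\eta(G)$, which are characteristic (hence normal) subgroups of $G$. At the vertex $v$ itself, the subgroups of $G$ generated by the ``values produced so far'' in the three words $\pi^{(v)}$, $\pi^{(v_1)}$, $\pi^{(v_2)}$ are respectively $[[A,B],C]$, $[[A,C],B]$, and $[A,[B,C]]$, and the classical three subgroup lemma for normal subgroups gives
\[
[[A,B],C]\le [[A,C],B]\cdot[A,[B,C]].
\]
I would then propagate this inclusion up through the common upper part of the tree of $w$ that is shared by all three words: at each ancestor of $v$ the current ``values subgroup'' is commutated with $w_s(G)$ for the appropriate sibling subword $s$, and the identity $[UV,X]=[U,X]\cdot[V,X]$, valid for normal subgroups $U,V,X$ of $G$, lets the product structure pass through unchanged. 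Reaching the root, one obtains the desired inclusion on verbal subgroups.

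The main obstacle I expect is making this climbing-up step fully precise: at each intermediate ancestor one has to identify the subgroup generated by the values of the current subtree with an honest subgroup-theoretic commutator in $G$, not merely with the subgroup generated by the commutators of individual values. This identification relies on the fact that the set of values of a multilinear commutator word is closed under conjugation in $G$, which upgrades a generating set of specific commutators into a normal one. Once this bookkeeping is settled, the chain of inclusions propagates without further difficulty from $v$ all the way up to the root, yielding the lemma.
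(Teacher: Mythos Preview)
The paper does not supply a proof of this lemma; it is quoted verbatim from \cite[Lemma~3.2]{FM} and then used as a black box in the proof of Theorem~\ref{thm:implicit}. So there is no in-paper argument to compare against.

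Your proposed argument is correct and is essentially the natural one (and, as far as I know, also the one in \cite{FM}): induct on $|S|$, peel off a sibling pair to pass to a smaller section, and use the three-subgroup lemma at the parent vertex together with the identity $[UV,X]=[U,X][V,X]$ for normal subgroups to carry the resulting product down to the root. Two small points worth making explicit in a final write-up. First, the identification $[\alpha,\beta](G)=[\alpha(G),\beta(G)]$ for multilinear commutator words $\alpha,\beta$ (which you invoke both in the base case and in the ``climbing'' step) follows at once from the fact that the set of $\alpha$-values is normal in $G$, so that $[\alpha(G),\beta(G)]=\langle [a,b]^g : a\in G_\alpha,\ b\in G_\beta,\ g\in G\rangle=\langle [a,b] : a\in G_\alpha,\ b\in G_\beta\rangle$. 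Second, one should check that $S^{-}=(S\setminus\{v_1,v_2\})\cup\{v\}$ is again a section; this is routine but not entirely vacuous with the paper's convention that ``descendant'' means towards the root. With these details in place your outline becomes a complete proof.
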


\begin{definition}
Let $\varphi$ and $ w $ be two multilinear commutator words.
Then:
\begin{enumerate}
\item
We say that $ w $ is a {\em constituent\/} of $\varphi$ if $ w $ is
the label of a vertex in the tree of $\varphi$.
\item
%We say that $\varphi$ is an {\em extension\/} of $ w $, or that $ w $ is a
%{\em restriction\/} of $\varphi$, if the tree of $\varphi$ is an upward extension of the tree
%of $ w $ (simply as a tree, without labels).
We say that $\varphi$ is an {\em extension\/} of $ w $ if the tree of $\varphi$ is an upward extension of the tree
of $ w $ (simply as a tree, without labels).
\end{enumerate}
\end{definition}

Thus, in order to get an extension of $ w $, we only need to draw new binary trees at some of the
vertices which are labelled by indeterminates in the tree of $ w $.
%Equivalently, a restriction of $ w $ is obtained by selecting a number of vertices and erasing all
%branches lying on top of these vertices in the tree of $ w $.
\begin{figure}[H]
\centering
\begin{tikzpicture}[level distance=5mm]
\tikzstyle{level 1}=[sibling distance=26mm]
\tikzstyle{level 2}=[sibling distance=13mm]
\tikzstyle{level 3}=[sibling distance=7mm]
\tikzstyle{level 4}=[sibling distance=4mm]
\coordinate (root)[grow=up, fill] circle (2pt)
child {[fill] circle (2pt)
       child {[fill] circle (2pt)
               child {[fill] circle (2pt)}
               child {[fill] circle (2pt)
                      child[gray!50] {[fill] circle (2pt)}
                      child[gray!50] {[fill] circle (2pt)}
                     }
             }
       child {[fill] circle (2pt)
               child {[fill] circle (2pt)
                      child[gray!50] {[fill] circle (2pt)}
                      child[gray!50] {[fill] circle (2pt)}
                     }
               child {[fill] circle (2pt)
                      child[gray!50] {[fill] circle (2pt)}
                      child[gray!50] {[fill] circle (2pt)}
                     }
             }
      }
child {[fill] circle (2pt)
       child {[fill] circle (2pt)
               child[gray!50] {[fill] circle (2pt)}
               child[gray!50] {[fill] circle (2pt)
                      child {[fill] circle (2pt)}
                      child {[fill] circle (2pt)}
                     }
             }
       child {[fill] circle (2pt)
               child {[fill] circle (2pt)}
               child {[fill] circle (2pt)
                      child {[fill] circle (2pt)}
                      child {[fill] circle (2pt)}
                     }
             }
      };
\node[below=2pt] at (root) {\scriptsize $[\gamma_4,\delta_2]$};
\draw[fill] (root-2-1) circle (2pt);
\draw[fill] (root-1-2-1) circle (2pt);
\draw[fill] (root-1-2-2) circle (2pt);
\draw[fill] (root-1-1-2) circle (2pt);
\end{tikzpicture}\label{extensionfigure}
%\caption An extension of $[\gamma_4,\delta_2]$.}
\end{figure}

In Figure~\ref{extensionfigure}, the black tree represents the word $ w =[\gamma_4,\delta_2]$,
and the extension of $ w $ which is obtained by adding the grey trees is
$\varphi=[[\gamma_3,\gamma_3],[\delta_2,\gamma_3]]$.
%Without having to check the commutator structure of these two words, the trees show that $\varphi$
%is $ w $-valued. 
%\comment{E: -valued is used only here}
%On the other hand, 
Clearly,
 the derived word $\delta_h$ is an extension of all words of height
less than or equal to $h$.

Observe that, if $G$ is a group and  $\varphi$ is an extension of $ w $, then $\varphi (G) \le w(G)$. Moreover, if $\alpha$ is a constituent of $w$,  
 then $w(G) \le \alpha (G)$.

The proof of Proposition \ref{goodgen} depends on the following result which is implicit in the proof of Theorem B of \cite{FM}. 

\begin{thm} \label{thm:implicit} 
Let $w=[\alpha,\beta]$ be a multilinear commutator word of height $h$. If $w \neq \delta_h$, then at least one of the subgroups $[w(G),\alpha(G)]$ and $[w(G),\beta(G)]$ is contained in a product of verbal subgroups corresponding to words which are proper extensions of $w$ of height $h$. 
\end{thm}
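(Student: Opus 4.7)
The plan is to apply Lemma~\ref{generalized 3 subgroup lemma} to $w$ with a carefully chosen $\eta \in \{\alpha,\beta\}$ and section $S$ of the tree $T$ of $w$, so that each resulting word $\pi^{(v)}$ is a proper extension of $w$ of height exactly $h$. The guiding observation is that if $v \in S$ is a leaf of $T$ at level $i$, then the tree of $\pi^{(v)}$ is obtained from that of $w$ by replacing the leaf $v$ with the tree of $[v,\eta]$, and this is indeed a proper extension of $w$ in the sense defined in the excerpt; its total height equals $h$ precisely when $i \ge \h(\eta) + 1$. When $v$ is not a leaf of $T$, the word $\pi^{(v)}$ fails to be an extension of $w$, so we must restrict $S$ to leaves sitting at sufficiently high level.

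To exploit the hypothesis $w \neq \delta_h$, first recall that $\delta_h = [\delta_{h-1}, \delta_{h-1}]$, so at least one of $\alpha$, $\beta$ must differ from $\delta_{h-1}$. Without loss of generality assume $\alpha \neq \delta_{h-1}$; then the $\alpha$-subtree of $T$ has positive defect, and in particular contains at least one leaf whose level in $T$ is strictly higher than the minimal level reached inside that subtree. This extra room is precisely what will allow us to fit proper extensions of height $h$.

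I would then take $\eta = \beta$ and construct $S$ by routing it through this extra room on the $\alpha$-side: each vertex of $S$ is to be chosen as a leaf of $T$ located at level at least $\h(\beta)+1$, while still ensuring that every indeterminate of $w$ appears in exactly one $w_v$ with $v\in S$. For each such $v$, one then verifies directly that $\pi^{(v)}$ is a proper extension of $w$ of height $h$, and Lemma~\ref{generalized 3 subgroup lemma} yields the desired inclusion $[w(G),\beta(G)] \le \prod_{v \in S} \pi^{(v)}(G)$.

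The main difficulty arises when $\h(\beta) = h-1$: the constraint $\mathrm{level}(v) \ge h$ is then so restrictive that essentially no leaf of $T$ meets it, and a naive choice of section fails. In this situation one has to descend further into the $\alpha$-subtree and apply Lemma~\ref{generalized 3 subgroup lemma} iteratively, using the companion structure of $T$ to translate the positive defect of $\alpha$ into a collection of admissible extensions. After finitely many iterations the defect of $\alpha$ is fully consumed and one obtains an inclusion $[w(G),\beta(G)] \le \prod_i \phi_i(G)$ with each $\phi_i$ a proper extension of $w$ of height $h$, as required. This iterated construction is the argument implicit in the proof of Theorem~B of~\cite{FM}, and the propagation of defect through companion vertices is where the bulk of the technical work lies.
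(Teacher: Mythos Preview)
Your plan has a real gap. You insist that the section $S$ consist only of leaves of $T$, on the grounds that otherwise $\pi^{(v)}$ fails to be an extension of $w$. But a section made up of leaves must contain \emph{all} leaves, so your requirement ``level $\ge \h(\beta)+1$'' becomes a condition on every leaf of $T$. This already fails in very small examples: for $w=\gamma_4=[[[x_1,x_2],x_3],x_4]$ one has $\alpha=\gamma_3\ne\delta_2$ and $\h(\beta)=0$, yet the leaves $x_1,x_2$ sit at level $0<\h(\beta)+1$. So the ``easy'' case you describe is essentially empty, and everything is pushed into the unspecified ``iterate using the companion structure'' step, which you do not carry out.

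The paper's argument avoids this entirely with two different choices. First, it does \emph{not} take $\eta\in\{\alpha,\beta\}$: it takes $\eta=\delta_i$, where $i$ is the largest level at which some vertex of $T$ is labelled $\delta_i$ (such $i$ exists with $1\le i<h$ precisely because $w\ne\delta_h$). Second, it takes $S$ to be the section obtained by cutting $T$ below level $i$, which contains many non-leaf vertices. The point you are missing is that one does not need $\pi^{(v)}$ itself to be an extension of $w$; one only needs $\pi^{(v)}(G)\le\varphi(G)$ for some proper extension $\varphi$ of height $h$. For each $v\in S$ the paper builds such a $\varphi=w^{(v)}$ explicitly: if $w_v=[w_p,w_q]$ with, say, $w_q\ne\delta_i$ (guaranteed by maximality of $i$), replace $w_q$ by $\delta_i$; if $w_v$ is an indeterminate, plant $\delta_i$ on top of $v$. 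In either case $w^{(v)}$ is a proper extension of $w$ of height $h$, and a direct comparison at the vertex $v$ gives $\pi^{(v)}(G)\le w^{(v)}(G)$. Lemma~\ref{generalized 3 subgroup lemma} then yields $[w(G),\delta_i(G)]\le\prod_{v\in S}w^{(v)}(G)$, and since $\delta_i$ is a constituent of $\alpha$ or of $\beta$, one of $\alpha(G),\beta(G)$ lies in $\delta_i(G)$, giving the conclusion in a single step with no iteration.
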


For the reader's convenience we include a proof.

\begin{proof}
Let $\Phi$ be the (finite) set of all multilinear commutator words of height $h$ which are a proper
extension of $ w $ and set $H=\prod_{\varphi \in \Phi}\varphi(G)$.

Let $i$ be the largest integer for which there is a vertex in the tree of $ w $ at level $i$
with label $\delta_i$.
Note that $1\le i<h$, since $ w $ is not a derived word.
Let $S$ be the  section of the tree of $ w $ obtained by cutting the tree below level $i$, so
that $S$ contains all vertices at level $i+1$ and all the vertices of the tree lying below level
$i+1$ which are labelled with an indeterminate.
For every vertex $v$ in $S$, we construct a word $ w ^{(v)}$ as follows.
If the label $ w _v$ of $v$ is not an indeterminate, then we can write $ w _v=[ w _p, w _q]$,
where $p$ and $q$ are the companion vertices at level $i$ having $v$ as immediate descendant.
By the maximality of $i$, one of these vertices is labelled with a word which is different from $\delta_i$.
For simplicity, let us assume that this happens for $q$, the vertex on the right (the argument is exactly
the same otherwise).
We define $ w ^{(v)}$ to be the word whose tree is obtained by replacing $ w _q$ with $\delta_i$ in
the tree of $ w $.
Thus the label of $ w ^{(v)}$ at the vertex $v$ is the commutator $[ w _p,\delta_i]$.
On the other hand, if $ w _v$ is an indeterminate, then $ w ^{(v)}$ is defined simply by
putting the tree corresponding to $\delta_i$ on top of the vertex $v$ in the tree of $ w $.

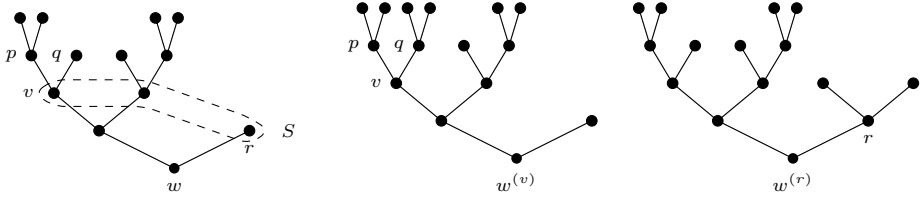
\begin{figure}[H]
\centering
\begin{tikzpicture}[level distance=5mm]
\tikzstyle{level 1}=[sibling distance=20mm]
\tikzstyle{level 2}=[sibling distance=12mm]
\tikzstyle{level 3}=[sibling distance=6mm]
\tikzstyle{level 4}=[sibling distance=3mm]
\coordinate (root)[grow=up, fill] circle (2pt)
child {[fill] circle (2pt)  }
child {[fill] circle (2pt)
       child {[fill] circle (2pt)
               child {[fill] circle (2pt)
                      child {[fill] circle (2pt)}
                      child {[fill] circle (2pt)}
                     }
              child {[fill] circle (2pt)}
             }
       child {[fill] circle (2pt)
               child {[fill] circle (2pt)}
               child {[fill] circle (2pt)
                      child {[fill] circle (2pt)}
                      child {[fill] circle (2pt)}
                     }
             }
      };
\node[transparent] (a) at (root-2-2) {\scriptsize aa}
node[transparent] (b) at (root-2-1) {\scriptsize a}
node[transparent] (c) at (root-1){\scriptsize aa};
\draw[dashed,rounded corners] (a.west) -- (a.north) -- (b.north)  --
(c.north) -- (c.east) -- (c.south) -- (b.south) -- (a.south) -- cycle;
\node[right=3mm] at (root-1) {\scriptsize $S$};
\node[left=2pt] at (root-2-2-2) {\scriptsize $p$};
\node[left=2pt] at (root-2-2-1) {\scriptsize $q$};
\node[left=4pt] at (root-2-2) {\scriptsize $v$};
\node[below=2pt] at (root-1) {\scriptsize $r$};
\node[below=2pt] at (root) {\scriptsize $ w $};
\end{tikzpicture}
\quad
\begin{tikzpicture}[level distance=5mm]
\tikzstyle{level 1}=[sibling distance=20mm]
\tikzstyle{level 2}=[sibling distance=12mm]
\tikzstyle{level 3}=[sibling distance=6mm]
\tikzstyle{level 4}=[sibling distance=3mm]
\coordinate (root)[grow=up, fill] circle (2pt)
child {[fill] circle (2pt)  }
child {[fill] circle (2pt)
       child {[fill] circle (2pt)
               child {[fill] circle (2pt)
                      child {[fill] circle (2pt)}
                      child {[fill] circle (2pt)}
                     }
              child {[fill] circle (2pt)}
             }
       child {[fill] circle (2pt)
               child {[fill] circle (2pt)
               child {[fill] circle (2pt)}
                      child {[fill] circle (2pt)}
                      }
               child {[fill] circle (2pt)
                      child {[fill] circle (2pt)}
                      child {[fill] circle (2pt)}
                     }
             }
      };
\node[left=2pt] at (root-2-2-2) {\scriptsize $p$};
\node[left=2pt] at (root-2-2-1) {\scriptsize $q$};
\node[left=2pt] at (root-2-2) {\scriptsize $v$};
\node[below=2pt] at (root) {\scriptsize $ w ^{(v)}$};
\end{tikzpicture}
\quad
\begin{tikzpicture}[level distance=5mm]
\tikzstyle{level 1}=[sibling distance=20mm]
\tikzstyle{level 2}=[sibling distance=12mm]
\tikzstyle{level 3}=[sibling distance=6mm]
\tikzstyle{level 4}=[sibling distance=3mm]
\coordinate (root)[grow=up, fill] circle (2pt)
child {[fill] circle (2pt)
                      child {[fill] circle (2pt)}
                      child {[fill] circle (2pt)} }
                      child {[fill] circle (2pt)
       child {[fill] circle (2pt)
               child {[fill] circle (2pt)
                      child {[fill] circle (2pt)}
                      child {[fill] circle (2pt)}
                     }
              child {[fill] circle (2pt)}
             }
       child {[fill] circle (2pt)
               child {[fill] circle (2pt)}
               child {[fill] circle (2pt)
                      child {[fill] circle (2pt)}
                      child {[fill] circle (2pt)}
                     }
             }
      };
\node[below=2pt] at (root-1) {\scriptsize $r$};
\node[below=2pt] at (root) {\scriptsize $ w ^{(r)}$};
\end{tikzpicture}
\caption{\label{section}The two different cases for the construction of $ w ^{(v)}$ with $v\in S$.
Observe that $i=1$ in this example.}
\end{figure}

In any case, it is clear that $\h( w ^{(v)})=h$ and that $ w ^{(v)}$ is a proper extension of
 $ w $, so that $ w ^{(v)}$ belongs to $\Phi$. Consequently, we have $ w ^{(v)}(G)\le H$ for every vertex $v$ in the section $S$.

On the other hand, if we apply Lemma \ref{generalized 3 subgroup lemma} to the section $S$ with
$\delta_i$ playing the role of $\eta$, then we have
\begin{equation}
\label{omega-delta}
[ w (G),\delta_i(G)]
\le
\prod_{v\in S} \, \pi^{(v)}(G).
\end{equation}
Here, $\pi^{(v)}$ is the word whose tree is obtained by inserting the tree of $[ w _v,\delta_i]$
at vertex $v$ in the tree of $ w $.
Now, it is easy to compare the two words $ w ^{(v)}$ and $\pi^{(v)}$: they look the same at all
vertices of the original tree of $ w $, except for the vertex $v$, where $\pi^{(v)}$ has the label
$[ w _v,\delta_i]$ and $ w ^{(v)}$ has either $[ w _p,\delta_i]$ or $\delta_i$.
In any of the two cases, we have
\[
(\pi^{(v)})_v(G) \le ( w ^{(v)})_v(G),
\]
and then, since $\pi^{(v)}$ and $ w ^{(v)}$ have the same labels outside the tree above $v$,
also
\[
\pi^{(v)}(G) \le  w ^{(v)}(G).
\]
Since this happens for all vertices in $S$, it follows from (\ref{omega-delta}) that
$$[ w (G),\delta_i(G)]\le H.$$
Now, by the definition of $i$, the derived word $\delta_i$ is a constituent of either $\alpha$ or
$\beta$, and consequently either $[ w (G),\alpha(G)]\le H$ or $[ w (G),\beta(G)]\le H$.
This concludes the proof of the theorem.
\end{proof} 

We will also need the following observation. 
 
\begin{lem}\label{T} Let $H,K$ be two subgroups of a group $G$ such that $H=\langle A \rangle$, $K=\langle B \rangle$ and let $C=\{[a,b]\,|\, a\in A,\,b\in B\}$.
Let $T=\langle C\rangle$. If $[a,b]^{\tilde a},[a,b]^{\tilde b}\in T$ for all $a,\tilde a\in A$ and $b,\tilde b\in B$, then $T=[H,K]$.
\end{lem}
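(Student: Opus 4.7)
The plan is to prove the two inclusions separately. The containment $T\le[H,K]$ is immediate, since every generator $[a,b]\in C$ of $T$ is by construction a commutator of an element of $H$ with an element of $K$.

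For the reverse inclusion $[H,K]\le T$, the first step is to notice that the hypothesis says precisely that $T$ is normalized by every element of $A$ and every element of $B$: conjugation by $\tilde a\in A$ sends each generator $[a,b]$ of $T$ into $T$ and therefore sends all of $T$ into $T$, and similarly for $\tilde b\in B$. Since a subgroup is normalized by an element if and only if it is normalized by its inverse, it follows that $T$ is normalized by $\langle A\rangle=H$ and by $\langle B\rangle=K$, and consequently by $\langle H,K\rangle$.

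The second step is to show, by induction on the length of $h$ as a word in $A\cup A^{-1}$, that $[h,b]\in T$ for every $h\in H$ and every $b\in B$. The base cases $h\in A$ and $h=a^{-1}$ are immediate, the latter via the identity $[a^{-1},b]=[a,b]^{-a^{-1}}$ combined with the normality of $T$ under $H$; the inductive step uses $[h_1 h_2,b]=[h_1,b]^{h_2}[h_2,b]$ together with the same normality. An entirely analogous induction on the length of $k$ as a word in $B\cup B^{-1}$, using $[h,k_1 k_2]=[h,k_2][h,k_1]^{k_2}$ and the normality of $T$ under $K$, upgrades this to $[h,k]\in T$ for all $h\in H$ and $k\in K$. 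Since $[H,K]$ is generated by such commutators, this yields $[H,K]\le T$ and completes the proof. There is no genuine obstacle here: the hypothesis is tailored exactly to make the standard commutator expansion identities push every commutator $[h,k]$ into $T$, and the only mildly technical point is the inverse case in the base of the induction, which is handled by the identity recalled above before any multiplicative expansion is carried out.
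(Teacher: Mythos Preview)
Your proof is correct and follows essentially the same approach as the paper: both first observe that the hypothesis makes $T$ normal in $\langle H,K\rangle$, and then deduce $[H,K]\le T$. The only cosmetic difference is that the paper phrases the second step by passing to the quotient $\langle H,K\rangle/T$ (where the images of $A$ and $B$ commute elementwise), whereas you unwind this via the standard commutator identities and induction on word length---these are two renderings of the same argument.
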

\begin{proof}
It follows from the hypotheses that $T$ is a normal subgroup of $\langle H,K\rangle$, thus in the quotient group $\langle \bar H,\bar K\rangle=\langle HT/T,KT/T\rangle$ we have that every generator of $\bar H$ commutes with every
generator of $\bar K$. Therefore  
 $[\bar H,\bar K]=1$, that is $[H,K]\le T$. The reverse inclusion is obvious.
\end{proof}

\begin{proof}[{\textsc{Proof of Proposition \ref{goodgen}}}]
We argue by double induction: we first use induction on the height of the word $ w $, and then, for a fixed value of the height, induction on the defect of $ w $.
If $ w $ has height $0$, then $ w =x_1$ and the result holds.
Now assume that $h=\h( w )\ge 1 $ and that the result has been proved for any multilinear commutator word whose height is less than $h$. If $\defect w =0$, then $ w $ is a derived word, and the result holds by  \cite[Lemma 2.2]{dSSh}. So we assume that $\defect w >0$.
Let us write $ w =[\alpha,\beta]$, where $\alpha$ and $\beta$ are multilinear commutator words of height smaller than $h$. Then, by induction on the height, $\alpha(G)=\langle X_\alpha\rangle$ and $\beta(G)=\langle X_\beta\rangle$.

Let $\Phi$ be the (finite) set of all multilinear commutator words of height $h$ which are a proper
extension of $ w $. Then, by induction on the defect,  $\varphi(G)=\langle X_\varphi\rangle$ for each $\varphi \in \Phi$. 

Let  $T=\langle X_w\rangle$. Since $X$ is commutator-closed, we have that $X_\varphi\subseteq X_w$ for each $\varphi \in \Phi$. Therefore $$\prod_{\varphi \in \Phi}\varphi(G)\le T.$$
In view of Theorem \ref{thm:implicit}, at least one of the subgroups $[ w (G),\alpha(G)]$ and
$[ w (G),\beta(G)]$ is contained  in $T$. 

Now, by Lemma \ref{T}, in order to prove that $T=w(G)$ it is enough to prove that 
$[a,b]^{\tilde a},[a,b]^{\tilde b}\in T$ for all $a,\tilde a\in X_\alpha$ and $b,\tilde b\in X_\beta$.

Let us assume that $[ w (G),\alpha(G)]\le T$, the other case being similar.

Then $[a,b]^{\tilde a}=[a,b][a,b,{\tilde a}]\in T$, as $[a,b]\in X_w$ and $[a,b,{\tilde a}]\in [ w (G),\alpha(G)]\le T$. This proves that $T$ is normalized by $\alpha(G)$.

Moreover, as $[a,b]\in \alpha(G)=\langle X_{\alpha}\rangle$, we can write $[a,b]=c_1c_2\cdots c_r$ with $c_i\in X_{\alpha}\cup{X_{\alpha}}^{-1}$. Then $[a,b]^{\tilde b}=[a,b][a,b,{\tilde b}]$, and by the standard commutator identities we can write $[a,b,{\tilde b}]=[c_1c_2\cdots c_r,\tilde b]$ as the
product of $r$ ${\alpha}(G)$-conjugates of elements of the form $[c_i,\tilde b]$.  
 If $c_i\in  X_\alpha$, then $[c_i,\tilde b]\in X_w\le T$. If $c_i^{-1}\in X_\alpha$,
then again $[c_i,\tilde b]=
([c_i^{-1},\tilde b]^{-1})^{c_i}\in T^{\alpha(G)}=T$.
It follows that $[a,b,{\tilde b}]\in T$ and thus $[a,b]^{\tilde b}=[a,b][a,b,{\tilde b}]\in T$, as desired.
\end{proof}

\section{Proof of Proposition \ref{prop:focal}}\label{sec:focal}

The Focal Subgroup Theorem (see e.g. \cite[Theorem 7.3.4]{gor}) says that if $P$ is a
Sylow subgroup of a finite group $G$, then $P\cap G'$ is generated by elements
of the form $[x,y]\in P$, where $x\in P$ and $y\in G$. In particular, it follows
that the Sylow subgroups of $G'$ are generated by commutators. Thus, the
following question arises.

{\emph{ Let $w$ be a commutator word, $G$ a finite group and $P$ a Sylow $p$-subgroup
of $w(G).$ Is it true that $P$ can be generated by $w$-values lying in $P$?}}

The above question was introduced in \cite{AFS} where it was proved that if $w$ is a multilinear commutator word, then $P$ is generated by powers of $w$-values. In this section we prove that if $G$ is soluble, then indeed $P$ can be generated by $w$-values. For the derived words this was established in \cite[Lemma 2.5]{dSSh}. To deal with arbitrary multilinear commutator words, we require the following combinatorial lemma.

Let $i$ be a positive integer. We denote by $I$ the set of all $n$-tuples $(i_1,\dots,i_n)$, where all entries $i_k$ are non-negative integers. We will view $I$ as a partially ordered set with the partial order given by the rule that $$(i_1,\dots,i_n)\leq(j_1,\dots,j_n)$$ if and only if $i_1\leq j_1,\dots,i_n\leq j_n$.

Given a group $G$, a  multilinear commutator word  $w=w(x_1,\dots,x_n)$ and  
 $\mathbf{i}=(i_1,\ldots,i_n) \in I$, we write
\[
w(\mathbf{i})=w(G^{(i_1)},\ldots,G^{(i_n)})
\]
 for  the subgroup generated by the $w$-values $w(g_1,\dots,g_n)$ 
 with  $g_{j} \in G^{(i_j)}$. 
 % with each $g_{j}$ belonging to the $i_j$-th derived subgroup $G^{(i_j)}=\delta_{i_j}(G)$ of $G$. \comment{E} 
 Further, set 
\[
w(\mathbf{i^+})=\prod w(\mathbf{j} ),
\]
 where the product is taken over all $\mathbf{j} \in I $ such that $\mathbf{j}>\mathbf{i}$.

 Observe that $ w(\mathbf{i}) =  w_{\mathbf{i}}(G)$ where $  w_{\mathbf{i}}$ is the extension of $w$ obtained by replacing,   in $w(x_1,\dots,x_n)$, 
each $x_j$   with the word $\delta_{i_j}$, for $j=1, \dots, n$.

Note that if $\delta_h(G)=1$, then there is an $n$-tuple $\mathbf{i}$ such that $w(\mathbf{i}) \neq 1$ but $w(\mathbf{i^+})=1$.

\begin{lem}\cite[Corollary 6]{DMS-coverings} \label{cor:ab}
 Let $G$ be a group, $w=w(x_1,\ldots,x_n)$  a  multilinear commutator word and $\mathbf{i} \in I$. If $w(\mathbf{i}^+)=1$, then $w(\mathbf{i})$ 
 is abelian.  
\end{lem}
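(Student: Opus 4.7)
The plan is to identify $w(\mathbf{i})$ with the verbal subgroup of the single multilinear commutator word $w_{\mathbf{i}}$ introduced just before the statement, and then apply the generalized three-subgroup lemma to show that $[w_{\mathbf{i}}(G),w_{\mathbf{i}}(G)]$ lies inside a product of verbal subgroups, each contained in some $w(\mathbf{j})$ with $\mathbf{j}>\mathbf{i}$, and hence trivial by hypothesis.

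First I would use the identification $w(\mathbf{i})=w_{\mathbf{i}}(G)$, where $w_{\mathbf{i}}$ is obtained from $w(x_1,\dots,x_n)$ by substituting $\delta_{i_j}$ (on a fresh alphabet of indeterminates) for each $x_j$. Since $\delta_{i_j}$ is a constituent of $w_{\mathbf{i}}$, the inclusion $w_{\mathbf{i}}(G)\le\delta_{i_j}(G)=G^{(i_j)}$ holds for every $j=1,\dots,n$. This is the key structural inclusion that will feed the main estimate.

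Next I would apply Lemma \ref{generalized 3 subgroup lemma} with both the role of $w$ and the role of $\eta$ in that statement taken by $w_{\mathbf{i}}$ (using a disjoint copy of indeterminates for the second occurrence). As the section $S$ of the tree of $w_{\mathbf{i}}$ I take the $n$ vertices labelled $\delta_{i_1},\dots,\delta_{i_n}$, i.e., the roots of the $\delta_{i_j}$-subtrees grafted onto the original leaves of $w$. These vertices are pairwise incomparable, and every indeterminate of $w_{\mathbf{i}}$ appears in exactly one of the words $\delta_{i_j}$, so $S$ is indeed a section. The lemma then yields
$$
[w_{\mathbf{i}}(G),w_{\mathbf{i}}(G)]\le\prod_{v\in S}\pi^{(v)}(G),
$$
where $\pi^{(v)}$ is the word whose tree is obtained from the tree of $w_{\mathbf{i}}$ by replacing the $\delta_{i_j}$-subtree at $v$ with the tree of $[\delta_{i_j},w_{\mathbf{i}}]$.

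Finally, each generator of $\pi^{(v)}(G)$, for $v$ corresponding to the $j$-th variable of $w$, is a $w$-shaped value whose $j$-th argument is a commutator $[a,b]$ with $a\in\delta_{i_j}(G)=G^{(i_j)}$ and $b\in w_{\mathbf{i}}(G)\le G^{(i_j)}$, so that $[a,b]\in[G^{(i_j)},G^{(i_j)}]=G^{(i_j+1)}$, while the remaining arguments still range over $G^{(i_k)}$ for $k\ne j$. Hence $\pi^{(v)}(G)\le w(\mathbf{j})$, with $\mathbf{j}$ the tuple obtained from $\mathbf{i}$ by incrementing its $j$-th entry, and $w(\mathbf{j})\le w(\mathbf{i}^+)=1$. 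The product above is therefore trivial, yielding $[w_{\mathbf{i}}(G),w_{\mathbf{i}}(G)]=1$, so $w(\mathbf{i})$ is abelian. The delicate step I expect is the combinatorial bookkeeping that the chosen $S$ is a section in the paper's formal sense and that $\pi^{(v)}$ genuinely has the described form; the essential structural input $w_{\mathbf{i}}(G)\le G^{(i_j)}$ is immediate from the constituent observation.
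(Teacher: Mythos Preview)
The paper does not prove this lemma itself; it is simply quoted from \cite[Corollary~6]{DMS-coverings}. So there is no in-paper argument to compare against.

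Your argument is correct. The identification $w(\mathbf{i})=w_{\mathbf{i}}(G)$ is exactly what the paper records just before the lemma, and the set $S$ consisting of the $n$ vertices of the tree of $w_{\mathbf{i}}$ labelled $\delta_{i_1},\dots,\delta_{i_n}$ is a section in the paper's sense: these vertices are pairwise incomparable and each indeterminate of $w_{\mathbf{i}}$ lies in exactly one of them. Applying Lemma~\ref{generalized 3 subgroup lemma} with $\eta=w_{\mathbf{i}}$ (on fresh indeterminates) gives $[w_{\mathbf{i}}(G),w_{\mathbf{i}}(G)]\le\prod_{v\in S}\pi^{(v)}(G)$, and your analysis of $\pi^{(v)}$ is accurate: at the vertex corresponding to the $j$th variable the label becomes $[\delta_{i_j},w_{\mathbf{i}}]$, whose values lie in $[G^{(i_j)},w_{\mathbf{i}}(G)]\le[G^{(i_j)},G^{(i_j)}]=G^{(i_j+1)}$ by the constituent inclusion $w_{\mathbf{i}}(G)\le\delta_{i_j}(G)$. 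Hence each $\pi^{(v)}(G)$ is contained in $w(\mathbf{j})$ with $\mathbf{j}>\mathbf{i}$, so lies in $w(\mathbf{i}^+)=1$, and $w(\mathbf{i})$ is abelian. The only cosmetic point is that when some $i_j=0$ the corresponding ``root of the $\delta_{i_j}$-subtree'' is just the original leaf $x_j$, but the argument goes through unchanged.
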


The next lemma is taken from \cite{AFS}.

\begin{lem}\cite[Lemma 1.1]{AFS} \label{p-set}
Let $G$ be a finite group  and $N$ be a normal subgroup of $G$. If $P$ is a Sylow $p$-subgroup of $G$ and $Y$ is a normal subset of $G$ consisting of $p$-elements, then $YN \cap PN=(Y \cap P) N$. 
\end{lem}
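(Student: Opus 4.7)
The plan is to establish the nontrivial inclusion $YN \cap PN \subseteq (Y\cap P)N$ by a Frattini-type argument carried out inside the subgroup $PN$; the reverse inclusion is immediate from $Y\cap P\subseteq Y$ and $Y\cap P\subseteq P$.

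I would start by picking $x \in YN \cap PN$ and writing $x = yn$ with $y \in Y$ and $n \in N$. Since $x \in PN$ and $n \in N$, the element $y$ also lies in $PN$, and by hypothesis it is a $p$-element of $G$. The key structural remark is that $P$ is already a Sylow $p$-subgroup of $PN$: the index $[PN:P] = [N:P\cap N]$ divides $[G:P]$ and is therefore coprime to $p$. By Sylow's theorem applied inside $PN$, the cyclic $p$-subgroup $\langle y\rangle$ is contained in some conjugate $P^g$ with $g \in PN$.

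Next I would decompose $g = p_0 n_0$ with $p_0 \in P$ and $n_0 \in N$. Since $p_0$ normalizes $P$, we have $P^g = P^{n_0}$, so the $N$-conjugate $y^{n_0^{-1}}$ lies in $P$. By the normality of $Y$ in $G$, the same conjugate lies in $Y$, producing an element $y' \in Y\cap P$. Since $n_0 \in N$, the images of $y$ and $y'$ in $G/N$ coincide, so $x = yn$ lies in $y'N \subseteq (Y\cap P)N$, as required. The only point that demands care is ensuring that the Sylow-conjugating element can be chosen from $N$ rather than from an arbitrary element of $PN$; this is precisely what the decomposition $g = p_0 n_0$ delivers, using that $p_0$ normalizes $P$. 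No input beyond Sylow's theorem in $PN$ and the normality of $Y$ in $G$ is needed.
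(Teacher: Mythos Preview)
Your argument is correct and is essentially the standard proof of this fact: the paper does not supply its own proof of this lemma but merely cites \cite[Lemma 1.1]{AFS}, where precisely this Sylow-in-$PN$ argument is given. Nothing is missing---the decomposition $g=p_0n_0$ is exactly what guarantees the conjugating element can be taken in $N$, and then normality of $Y$ finishes the job.
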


The following lemma  will also play an important role.
\begin{lem}\cite[Lemma 2.1]{dSSh}\label{sol-gen}
Any finite soluble group is generated by a commu\-ta\-tor-closed set all of whose elements have prime power order.
\end{lem}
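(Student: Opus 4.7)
My plan is to proceed by induction on $|G|$; the base case $|G|=1$ is trivial. Let $N$ be a minimal normal subgroup of $G$: since $G$ is soluble, $N$ is an elementary abelian $p$-group for some prime $p$. Applying the inductive hypothesis to $G/N$ yields a commutator-closed generating set $\bar Y$ of $G/N$ consisting of prime-power-order elements.

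The first task is to lift each $\bar y \in \bar Y$ to a prime-power-order element of $G$. If $\bar y$ has $p$-power order, any lift inside a fixed Sylow $p$-subgroup $P \ge N$ of $G$ is a $p$-element. If $\bar y$ has $q$-power order with $q\neq p$, then for any lift $g$ the subgroup $\langle g\rangle N$ contains $N$ as a normal Hall $p$-subgroup; by the Schur--Zassenhaus theorem there is a complement of order $q^k$, whose generator provides a $q$-element lift. I would perform this lifting coherently within a fixed Sylow system $\Sigma = \{P_r\}_{r\in\pi(G)}$, placing each $r$-element lift inside $P_r$. Let $Y$ be the resulting set and put $X := Y \cup N$.

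All elements of $X$ have prime-power order, and $X$ generates $G$ because $YN/N = \bar Y$ generates $G/N$. For $y_1, y_2 \in Y$, the commutator $[y_1, y_2]$ reduces modulo $N$ to $[\bar y_1, \bar y_2]\in \bar Y$, so $[y_1, y_2] = y_3 n$, where $y_3 \in Y$ is the chosen lift of $[\bar y_1, \bar y_2]$ and $n \in N$. The inclusions $[N,N]=1$ and $[N,Y]\subseteq N$ are immediate. If $y_3$ is a $p$-element, then $y_3 n \in P$ is again a $p$-element, and lies in $X$ once $X$ is enlarged to contain the full $N$-coset of $y_3$ inside $P$ (all elements there are $p$-elements, so no new non-prime-power elements are introduced).

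The main obstacle is when $y_3$ has $q$-power order with $q\neq p$: then $y_3 n$ is the product of a $q$-element and a $p$-element, typically of non-prime-power order, so we must force $n = 1$. My plan to achieve this is to use the uniqueness-up-to-conjugation clause of Schur--Zassenhaus: each $q$-element lift can be adjusted by an element of $N$, so we want to choose these adjustments simultaneously so that all mixed commutators $[y_1,y_2]$ hit the chosen lift $y_3$ exactly. The commutator-closure of $\bar Y$ should supply the compatibility needed to trivialize this $N$-valued cocycle $(y_1,y_2)\mapsto [y_1,y_2]y_3^{-1}\in N$, and verifying its triviality (probably by reducing to a finite cohomological statement using the coprimeness of $|N|$ and $q^k$) is what I expect to be the technical heart of the argument.
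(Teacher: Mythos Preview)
The paper does not supply a proof of this lemma; it is simply quoted from \cite[Lemma~2.1]{dSSh}. So there is no in-paper argument to compare your attempt against, and your task is really to produce an independent proof.

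Your outline leaves the decisive step open, and the proposed fix is not workable as stated. The function $(\bar y_1,\bar y_2)\mapsto [y_1,y_2]\,y_3^{-1}\in N$ is not a $2$-cocycle for any standard cohomology: it is defined on the \emph{set} $\bar Y\times\bar Y$ relative to the commutator pairing, not on $(G/N)\times(G/N)$ relative to the group law, and there is no coboundary operator whose image records ``change of prime-power lift''. The coprimeness of $|N|$ and $q^k$ merely guarantees that a $q$-power lift $y_3$ exists; it says nothing about whether the actual commutator $[y_1,y_2]$ equals $y_3$. Worse, the freedom you intend to exploit can disappear entirely: if the chosen minimal normal subgroup satisfies $N\le Z(G)$ (which certainly occurs), then $[y_1n_1,\,y_2n_2]=[y_1,y_2]$ for all $n_1,n_2\in N$, so no re-choice of lifts moves a single commutator, and you would need a direct argument that $[y_1,y_2]$ is automatically the $q$-power element of its $N$-coset. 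There is also an earlier unjustified step: placing every $r$-element lift inside $P_r$ presupposes that each $r$-element of $\bar Y$ already lies in the particular Sylow $r$-subgroup $P_rN/N$ of $G/N$, which nothing in your inductive hypothesis guarantees. If you want to salvage the inductive scheme, a natural move is to strengthen the hypothesis so that the commutator-closed set in each quotient lies inside the union of a fixed Sylow basis; then lifts can be taken inside a Sylow basis of $G$ and each commutator is confined to a Hall subgroup of two primes, where its order is easier to control. Alternatively, consult the short argument in the cited reference.
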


Now we are ready to prove that if  $w$ is a multilinear commutator word and $G$ is a finite soluble group, then for any Sylow $p$-subgroup $P$ of $G$ the corresponding Sylow $p$-subgroup $P \cap w(G)$ of $w(G)$ can be generated 
by the $w$-values lying in $P$.

\begin{proof}[{\textsc{Proof of Proposition \ref{prop:focal}}}] Recall that $G$ is a finite soluble group and $w$ is a multilinear commutator word. We know from Lemma \ref{sol-gen} that there is a commutator-closed subset $X\subseteq G$ such that $X$ generates $G$ and every element of $X$ has prime power order. 

Recall that $X_w$ stands for the set $\{w(x_1, \dots , x_n) \mid x_i \in X \}$. For a prime $p\in\pi(G)$ set 
$$X_{w,p}= \{ x \in X_w \mid x \ \textrm{is a $p$-element} \}$$ and 
$$Y_{w,p}=\{ X_{w,p}^G \},$$ that is, $Y_{w,p}$ is the union of the conjugacy classes of elements of $X_{w,p}$. 

We claim that if $P$ is a Sylow $p$-subgroup of $G$, then $P\cap w(G)=\langle P\cap Y_{w,p}\rangle$. 

Without loss of generality assume that $G$ is a minimal counterexample to the above claim. %Therefore, if 
 If $N$ is a nontrivial normal subgroup of $G$, then the set $\bar X= \{ xN \mid x \in X \}$ has the required properties.
 Since $X$ is a commutator-closed set, we deduce that every element of $X_w$ has prime power order. Hence, 
 $$(\bar X)_{w,p} =\overline{ X_{w,p}}=\{  xN \in \bar X_w \mid xN \ \textrm{is a $p$-element } \},$$ so that $\bar Y_{w,p} =\overline{Y_{w,p}}$. By minimality of $G$, 
$$\bar P \cap w(\bar G)=\langle \bar P \cap \bar Y_{w,p} \rangle = \langle \bar P \cap \overline{ Y_{w,p}} \rangle.$$ 
By virtue of Lemma \ref{p-set},
\begin{equation}\label{N}
P \cap w(G)= \langle P \cap N, P \cap Y_{w,p} \rangle.
\end{equation}
So if $N$ is a $p'$-group, then $P \cap N=1 $ and $P \cap w(G)= \langle  P \cap Y_{w,p} \rangle$, a contradiction. Hence $G$ has no nontrivial normal $p'$-subgroups. 

We will now use the notation introduced before Lemma \ref{cor:ab}.
Let $\mathbf{i} \in I$ such that $w(\mathbf{i}) \neq 1$ but $w(\mathbf{i^+})=1$. By Lemma \ref{cor:ab}, $ w(\mathbf{i})$ 
 is abelian.  Since $G$ has no nontrivial normal $p'$-subgroups, $w(\mathbf{i})$ is a $p$-group. 
It follows from Proposition \ref{goodgen} that $$ w(\mathbf{i}) =  w_{\mathbf{i}}(G)= \langle X_{w_{\mathbf{i}}} \rangle.$$
 Since $ w(\mathbf{i})$ is a $p$-group, $X_{w_{\mathbf{i}}}= X_{w_{\mathbf{i},p}}$ and $P \cap   w(\mathbf{i})= w(\mathbf{i})$. 
 Moreover, every $w_{\mathbf{i}}$-value is a $w$-value, whence $X_{w_{\mathbf{i},p}} \le Y_{w,p}$. 
 We deduce from  (\ref{N}), applied with $N= w(\mathbf{i})$, that 
  $$P \cap w(G)= \langle  P \cap Y_{w,p}, P \cap w(\mathbf{i}) \rangle
 =  \langle  P \cap Y_{w,p},  w(\mathbf{i}) \rangle= \langle  P \cap Y_{w,p}\rangle,$$
 contrary to our assumptions. 
\end{proof}

\section{The proofs of Theorem \ref{thm:sol} and Theorem \ref{thm:metanilp}} 

As mentioned in the introduction, now the proof of Theorem \ref{thm:sol} follows easily. 

\begin{proof}[{\textsc{of Theorem \ref{thm:sol}}}]
Recall that $G$ is a finite group in which every nilpotent subgroup generated by $w$-values has rank at most $r$. Let $H$ be a subgroup of $w(G)$ and $P$ a Sylow $p$-subgroup of $H$. By the aforementioned result of Guralnick \cite{gur} and Lucchini \cite{dp} it is sufficient to prove that the rank of $P$ is at most $r$. 

The Ore conjecture that every element of a nonabelian finite simple group is a commutator was famously verified in \cite{lost}. Since $w$ is a multilinear commutator word, we easily deduce that if $G$ is a nonabelian simple group, then every element of $G$ is a $w$-value. So $P$ is a nilpotent subgroup of $G$ generated by $w$-values. By hypotheses, the rank of $P$ is at most $r$. 

Assume now that $G$ is soluble and let $\tilde P$ be a Sylow $p$-subgroup of $w(G)$  containing $P$. By Proposition \ref{prop:focal} $\tilde P$ can be generated by $w$-values. Hence, the rank of $\tilde P$ is at most $r$. So also $P$ has rank at most $r$.  
\end{proof} 

 We now start the preparations for the proof of Theorem  \ref{thm:metanilp}.

The Frattini subgroup of a group $G$ is denoted by $\frat(G)$.   
 Let us denote by $\mathrm{Fit}(G)$ the Fitting subgroup of $G$ and by $F_i(G)$ the $i$th term of the upper Fitting series of $G$, defined recursively by $F_1(G)=\mathrm{Fit}(G)$ and $F_i(G)/F_{i-1}(G)=\mathrm{Fit}(G/F_{i-1}(G))$. If $G$ is a finite soluble group, the least number $h$ with the property that $F_h(G)=G$ is called the Fitting height of $G$. 

The next lemma is quite well known. 

\begin{lem}\label{lem:fitting}
Let  $G$ be a  finite  soluble group  of rank at most $r$. Then the Fitting height of $G$ is at most $2r+1$.  
\end{lem}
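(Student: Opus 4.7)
The plan is to reduce the bound on Fitting height to a bound on the Fitting height of soluble subgroups of general linear groups of small degree. First I would pass to the quotient $G/\Phi(G)$: since $\Phi(G) \le F(G)$, this quotient has the same Fitting height as $G$, while quotients do not increase rank. Thus I may assume $\Phi(G) = 1$.

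Under this assumption, Gasch\"utz's theorem for soluble groups gives that $F := F(G)$ is a direct product $V_1 \oplus \cdots \oplus V_k$ of minimal normal subgroups of $G$, each $V_i$ being an elementary abelian $p_i$-group of dimension $d_i \le r$ (since $V_i$ is a subgroup of $G$), and moreover $C_G(F) = F$. Hence $G/F$ embeds faithfully in $\mathrm{Aut}(F) = \prod_{i=1}^{k} GL_{d_i}(p_i)$. Because the Fitting height of a subgroup of a direct product is bounded by the maximum of the Fitting heights of its projections, and because $h(G) = 1 + h(G/F)$, it suffices to prove that every soluble subgroup $H$ of $GL_d(p)$ with $d \le r$ satisfies $h(H) \le 2d$.

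I would prove this bound by induction on $d$. The case $d = 1$ is trivial, as $GL_1(p)$ is abelian. For $d \ge 2$, if $H$ acts reducibly on $V = \mathbb{F}_p^d$, pick a proper non-zero $H$-invariant subspace $U$; then the kernel of the map $H \to GL(U) \times GL(V/U)$ consists of unipotent elements and is hence a $p$-group, so applying the induction hypothesis to both projections gives $h(H) \le 1 + \max(2\dim U, 2\dim(V/U)) \le 1 + 2(d-1) \le 2d$.

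The main obstacle will be the irreducible case. Here I would invoke Clifford's theorem together with the primitive/imprimitive dichotomy for irreducible soluble linear groups. In the imprimitive case $V$ decomposes as a non-trivial $H$-permuted direct sum, yielding a smaller dimension for the induction. In the primitive case I would appeal to the classical structure theorem for primitive irreducible soluble linear groups: $F(H) = Z \cdot E$ where $Z$ is cyclic central and $E$ is extraspecial of order $q^{1+2m}$ with $q^{m} \le d$, so $H/F(H)$ embeds into a symplectic group of degree $2m \le 2\log_q d$, from which the induction closes.
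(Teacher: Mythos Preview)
Your reduction to $\Phi(G)=1$ and the embedding of $G/F$ into $\prod_i GL_{d_i}(p_i)$ via $C_G(F)=F$ is exactly what the paper does. (One small point: the equality $h(G)=h(G/\Phi(G))$ is not a consequence of $\Phi(G)\le F(G)$ alone; you need the stronger fact $\mathrm{Fit}(G/\Phi(G))=\mathrm{Fit}(G)/\Phi(G)$, which the paper cites.)

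The real divergence is at the linear-group stage. The paper does not attempt any induction on $d$: it simply quotes the classical bound (Zassenhaus--Newman, as in Dixon's book) that a soluble subgroup of $GL(n,\mathbb{F})$ has derived length at most $2n$. This instantly gives $G/F$ derived length $\le 2r$, hence Fitting height $\le 2r$, and so $h(G)\le 2r+1$. That is the entire argument.

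Your route---proving $h(H)\le 2d$ for soluble $H\le GL_d(p)$ by induction through reducible/imprimitive/primitive cases---is in effect a sketch of how one would \emph{prove} such a linear bound, and it can be made to work, but as written the primitive case is a genuine gap. With $F(H)=Z\cdot E$ and $E$ extraspecial of order $q^{1+2m}$, one has $q^m\mid d$, so the symplectic factor has degree $2m$; but $2m$ need not be strictly less than $d$ (take $q=2$, $m=1$, $d=2$, or $q=2$, $m=2$, $d=4$), so the induction on $d$ does not close automatically. Even when $2m<d$, the naive estimate $h(H)\le 1+2(2m)=1+4m$ combined with $m\le\log_2 d$ gives $1+4\log_2 d$, which exceeds $2d$ for small $d$. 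You would need either separate treatment of small degrees or a sharper bookkeeping to make this go through, and the imprimitive case also needs the permutation quotient handled. Citing the derived-length bound, as the paper does, avoids all of this.
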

\begin{proof}
 Since $\mathrm{Fit}(G)/\frat(G)= \mathrm{Fit}(G/\frat(G) )$
(see e.g. \cite[5.2.15]{Rob}), without loss of generality, we can assume  $\frat(G) = 1$.
%Since $\frat(G)$ is contained in the Fitting subgroup $F$ of $G$, without loss of generality, we can assume  $\frat(G) = 1$.
%\comment{P: (This is not ok. We need to use that F(G)/Frat(G)=Fit(G/Frat(G))).}
 In this case, $F=\mathrm{Fit}(G)$ is a direct product of abelian minimal normal subgroups of $G$, say 
\[F= N_1 \times \ldots \times N_t\]
where each $N_i$ is an elementary abelian $p_i$-group of rank at most $r$.

Set $H_i=G/C_G(N_i)$, for $i=1, \ldots t$. Every $H_i$ is isomorphic to a  soluble linear group acting on $N_i$, where $N_i$ is a vector space of dimension at most $r$. As  a soluble subgroup of $\GL(n, \mathbb{F})$, where $ \mathbb{F}$ is any field, has derived length  at most $2n$ (see for instance \cite[Theorem 6.2A]{dixlin}), the derived length of each $H_i$ is bounded by $2r$. 
Therefore, $G/\cap_{i=1}^t C_G(N_i)$ has derived length at most $2r$. Since $ \cap_{i=1}^t C_G(N_i)=C_G(F)\le F$ %(see e.g. \cite[31.10]{as}) 
 (see e.g. \cite[5.4.4]{Rob}) 
 it follows that $G/F$  has derived length at most $2r$. We conclude that $G$ has Fitting height at most $2r+1$. 
\end{proof}

As a corollary of Theorem \ref{thm:sol}  we deduce the following. 
\begin{cor}\label{cor:sol-fit}
Let $w$ be a multilinear commutator word and $K$ a  finite soluble group in which every nilpotent subgroup generated by $w$-values has rank at most $r$. Then the Fitting height of $K$ is bounded in terms of $r$ and $w$.
\end{cor}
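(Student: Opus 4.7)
The plan is to bound the Fitting height of $K$ by splitting $K$ through the normal subgroup $w(K)$ and using the fact that Fitting height is subadditive under extensions: if $N \trianglelefteq G$ is a normal subgroup, then the Fitting height of $G$ is bounded by the sum of the Fitting heights of $N$ and $G/N$. Thus it suffices to bound separately the Fitting height of $w(K)$ and of the quotient $K/w(K)$, both in terms of $r$ and $w$.

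For the verbal subgroup, I would apply Theorem \ref{thm:sol} directly: since $K$ is finite soluble and every nilpotent subgroup generated by $w$-values has rank at most $r$, we obtain that $w(K)$ has rank at most $r+1$. Lemma \ref{lem:fitting} then gives that the Fitting height of $w(K)$ is at most $2(r+1)+1 = 2r+3$.

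For the quotient $K/w(K)$, the key observation is that $w$ is an identity on this group. Let $h=\h(w)$. As recorded in Section \ref{sec:mcw}, the derived word $\delta_h$ is an extension of every multilinear commutator word of height at most $h$; consequently $\delta_h(K/w(K)) \leq w(K/w(K))=1$. Therefore $K/w(K)$ is soluble of derived length at most $h$, and in particular its Fitting height is at most $h$. Combining the two bounds yields that the Fitting height of $K$ is at most $h+2r+3$, which depends only on $w$ and $r$.

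There is essentially no obstacle here beyond invoking Theorem \ref{thm:sol} and Lemma \ref{lem:fitting}; the only point to verify is the soft structural remark that $K/w(K)$ has bounded derived length, which follows from the extension relation $\delta_h(G)\le w(G)$ for $w$ of height $h$.
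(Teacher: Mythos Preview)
Your argument is correct and follows essentially the same route as the paper's own proof: both bound the derived length (hence Fitting height) of $K/w(K)$ by the height of $w$ via the inclusion $\delta_h(K)\le w(K)$, and then bound the Fitting height of $w(K)$ by combining Theorem~\ref{thm:sol} with Lemma~\ref{lem:fitting}. The only cosmetic difference is that you make the explicit numerical bound $h+2r+3$, whereas the paper is content with ``bounded in terms of $r$ and $w$''.
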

\begin{proof} Let $n$  be the height of $w$. As every $\delta_n$-value of $K$ is a $w$-value,
  $K/w(K)$ is soluble of derived length at most $n$. So it is sufficient to bound the Fitting height of $w(K)$. By Theorem \ref{thm:sol} applied to $K$, the verbal subgroup $w(K)$ has rank at most $r+1$. Therefore, by Lemma \ref{lem:fitting}, the Fitting height of $w(K)$ is bounded  in terms of $r$. 
\end{proof}

Every finite group $G$ has a normal series each of whose quotients either is soluble or is a direct product of nonabelian simple groups. The nonsoluble length of $G$, denoted by $\lambda (G)$, was defined in \cite{junta2} as the minimal number of nonsoluble factors in a series of this kind: if
$$
1=G_0\leq G_1\leq \dots \leq G_{2k+1}=G
$$
is a shortest normal series in which  for $i$  even  the quotient $G_{i+1}/G_{i}$ is soluble (possibly trivial), and for $i$ odd the quotient $G_{i+1}/G_{i}$   is a (non-empty) direct product of nonabelian simple groups, then the nonsoluble length $\lambda (G)$ is equal to $k$.

%In what follows we denote by $R(K)$ the soluble radical and by $\soc(K)$ the socle of a group $K$. Recall that $\soc(K)$ is the product of all minimal normal subgroups of $K$. If $G$ is a nonsoluble finite group such that $R(G)=1$, then of course $\lambda(G/\soc(G))=\lambda(G)-1$.
%
%
%\begin{lem}\label{lem:olG}\comment{serve?? lo scrivo comunque, poi cancelleremo}
%Let $G$ be a group with $\la(G)=1$. Let $N=S_1\times\cdots\times S_t$ be a normal subgroup in $G$ which is a direct product of isomorphic nonabelian simple groups. Then the group of permutations induced by $G$ on the set $\{S_1,\ldots,S_t\}$ is soluble. 
%\end{lem}
%\begin{proof} 
%Let $\ol G$ be the group of permutations induced by $G$ on the set $\{S_1, \ldots,S_t\}$. Since the soluble radical $R(G)$ of $G$ centralises  $N$, and $\la(G)=\la(G/R)$, we can assume that $R(G)=1$. Then, as $\la(G)=1$, it follows that $G/\soc(G)$ is soluble. Since $\soc(G)$ normalises all $S_i$, it follows that $\ol G$ is an homomorphic image of $G/\soc(G)$. Hence $\ol G$ is soluble. 
%\end{proof}

\begin{prop}\label{la=1} \cite[Proposition 2.2]{DS-lambda}
Let $N,M$ be normal subgroups of $G$ such that $\lambda(G/N)\leq\lambda(G/M)\leq1$. Then $\lambda(G/N\cap M)\leq1$. 
\end{prop}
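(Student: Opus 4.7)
The plan is to embed $G/(N\cap M)$ into a direct product and then exploit how $\lambda$ behaves under direct products and subgroups. Consider the diagonal homomorphism
\[
\psi\colon G/(N\cap M)\longrightarrow G/N\times G/M,\qquad g(N\cap M)\longmapsto (gN,\,gM).
\]
Since $N\cap M\le N$ and $N\cap M\le M$, the map $\psi$ is well defined; its kernel is $\{g\in G:g\in N\text{ and }g\in M\}/(N\cap M)=1$, so $\psi$ is an injective homomorphism.

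Fix normal subgroups $N\le N_1\le N_2\le G$ and $M\le M_1\le M_2\le G$ realizing $\lambda(G/N)\le 1$ and $\lambda(G/M)\le 1$, so that $N_1/N,\ G/N_2,\ M_1/M,\ G/M_2$ are soluble and $N_2/N_1,\ M_2/M_1$ are direct products of nonabelian simple groups. On $G/N\times G/M$ the series
\[
1\le (N_1/N)\times (M_1/M)\le (N_2/N)\times (M_2/M)\le G/N\times G/M
\]
has successive factors $N_1/N\times M_1/M$, $N_2/N_1\times M_2/M_1$ and $G/N_2\times G/M_2$, which are respectively soluble, a direct product of nonabelian simple groups, and soluble. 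Consequently $\lambda(G/N\times G/M)\le 1$.

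To conclude it remains to transfer this bound across the subgroup $\psi(G/(N\cap M))$. The point to establish is that $\lambda$ is non-increasing under taking subgroups. Given a series $1\le A\le B\le K$ witnessing $\lambda(K)\le 1$ and a subgroup $H\le K$, intersecting with $H$ yields $1\le H\cap A\le H\cap B\le H$; the outer factors are soluble, but the middle factor $(H\cap B)/(H\cap A)$ only embeds in the semisimple $B/A$, and so need not itself be a direct product of nonabelian simple groups. This is the principal obstacle.

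The remedy is to show that any subgroup $L$ of a direct product of nonabelian simple groups has $\lambda(L)\le 1$, and then refine the intersected series accordingly. The soluble radical $R(L)$ is soluble by definition; the socle of $L/R(L)$ is a direct product of nonabelian simple groups, since $L/R(L)$ has trivial soluble radical. By Schreier's conjecture, a consequence of the classification of finite simple groups asserting that $\mathrm{Out}(S)$ is soluble for every nonabelian simple group $S$, the quotient of $L/R(L)$ by its socle embeds in a direct product of soluble outer automorphism groups and hence is soluble. Splicing this soluble-semisimple-soluble decomposition of $(H\cap B)/(H\cap A)$ into the intersected series for $H$ gives a witness that $\lambda(H)\le 1$. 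Applied to $H=\psi(G/(N\cap M))\le G/N\times G/M$, this yields $\lambda(G/(N\cap M))\le 1$, as required.
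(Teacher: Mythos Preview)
Your argument contains a genuine gap. The claim that ``any subgroup $L$ of a direct product of nonabelian simple groups has $\lambda(L)\le 1$'' is false, and so is the step you use to justify it. Take $L=A_5\wr A_5$ in its imprimitive action on $25$ points; every element acts as an even permutation, so $L$ embeds in the single nonabelian simple group $A_{25}$. Here $R(L)=1$, $\mathrm{soc}(L)=A_5^{\,5}$, and $L/\mathrm{soc}(L)\cong A_5$ is \emph{not} soluble, whence $\lambda(L)=2$. The error in your Schreier appeal is that $\mathrm{Out}(S_1\times\cdots\times S_k)$ is not just the direct product $\prod_i\mathrm{Out}(S_i)$: it is an extension of that product by the group of permutations of the isomorphic factors, and the latter can be a full symmetric group. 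Schreier's conjecture controls the first piece only.

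This example shows more: the nonsoluble length is \emph{not} monotone under passage to arbitrary subgroups, so the whole strategy of embedding $G/(N\cap M)$ into $G/N\times G/M$ and then quoting subgroup monotonicity cannot work as written. What makes the proposition true is the extra structure you discarded: the image of $G/(N\cap M)$ is a \emph{subdirect} product, or equivalently (after reducing to $N\cap M=1$) both $N$ and $M$ are \emph{normal} in $G$, so that $N$ sits as a normal subgroup of $G/M$ and vice versa. Intersecting a witnessing series for $\lambda(G/M)\le 1$ with a normal subgroup does preserve the semisimple layer (normal subgroups of semisimple groups are semisimple), and this is what a correct argument exploits. The present paper does not give a proof of this proposition at all; it simply cites \cite{DS-lambda}, and the argument there uses this normality rather than any general subgroup monotonicity.
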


Given a finite group $G$, we define $T(G)$ as the intersection of all normal subgroups $N$ of $G$ such that $\lambda(G/N)\leq1$. It is easy to deduce from   Proposition \ref{la=1} that $\la(G/T(G))\leq1$ and $\la(G/T(G))=1$ if and only if $G$ is nonsoluble.

It is proved in \cite{junta2} that the nonsoluble length $\lambda(G)$ does not exceed the maximum Fitting height of soluble subgroups of a finite group $G$. A straightforward consequence of this result and Corollary \ref{cor:sol-fit} is the following lemma. 

\begin{lem}\label{cor:non-sol-l}
Let $w$ be a multilinear commutator word and $G$ a  finite group  in which every nilpotent subgroup generated by $w$-values has rank at most $r$. Then the nonsoluble length of $G$ is bounded  in terms of $r$ and $w$. 
\end{lem}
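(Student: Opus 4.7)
The plan is to combine the two quoted ingredients: the theorem of \cite{junta2} bounding $\lambda(G)$ by the maximum Fitting height of soluble subgroups, and Corollary \ref{cor:sol-fit}, which bounds the Fitting height of soluble groups satisfying the rank hypothesis. So the whole task reduces to showing that the rank hypothesis on $G$ is inherited by an arbitrary soluble subgroup $H\le G$.

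For the inheritance step, I would simply observe that any $w$-value computed in $H$ is in particular a $w$-value in $G$, since $w(h_1,\dots,h_n)$ with $h_i\in H\subseteq G$ is by definition a $w$-value in $G$. Consequently, if $K\le H$ is a nilpotent subgroup generated by $H$-values of $w$, then $K$ is also a nilpotent subgroup of $G$ generated by $w$-values, and so by hypothesis has rank at most $r$. Thus $H$ itself satisfies the hypothesis of Corollary \ref{cor:sol-fit} with the same parameter $r$, and hence its Fitting height is bounded by a quantity $f=f(r,w)$ depending only on $r$ and $w$.

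Finally, by the result of \cite{junta2} quoted just before the lemma, $\lambda(G)$ is at most the maximum Fitting height of a soluble subgroup of $G$, which by the preceding paragraph is at most $f(r,w)$. This gives the desired bound. There is no real obstacle here; the lemma is essentially a bookkeeping corollary, and the only point worth being careful about is that the rank condition, being stated in terms of $w$-values in $G$, passes trivially to subgroups (unlike the analogous condition modulo a normal subgroup, which does not survive quotients and is precisely the obstruction discussed in the introduction).
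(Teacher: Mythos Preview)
Your proof is correct and follows exactly the paper's approach: the paper simply states that the lemma is a straightforward consequence of Corollary~\ref{cor:sol-fit} together with the bound $\lambda(G)\le\max\{\text{Fitting height of soluble subgroups of }G\}$ from \cite{junta2}. You have merely made explicit the (obvious) inheritance of the rank hypothesis to subgroups, which the paper leaves implicit.
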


The following well known lemma will be useful.
\begin{lem}\label{lem:H}
Let $N$ be a normal subgroup of a finite group $G$. Then there exists a subgroup $H$ of $G$ such that $G=HN$ and $H \cap N \le \frat (H)$. 
\end{lem}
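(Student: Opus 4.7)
The plan is to produce $H$ by a straightforward minimality argument. I would choose $H$ to be a subgroup of $G$ that is \emph{minimal} among those satisfying $HN=G$. Such an $H$ exists because $G$ itself satisfies $GN=G$ and $G$ is finite; we may then pass to a subgroup of smallest order with this property. By construction, the first condition $G=HN$ holds.

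The remaining task is to show $H\cap N\le\frat(H)$, i.e.\ that $H\cap N$ is contained in every maximal subgroup of $H$. I would argue by contradiction: suppose $M$ is a maximal subgroup of $H$ with $H\cap N\not\le M$. Since $N\trianglelefteq G$, the intersection $H\cap N$ is normal in $H$, so $M(H\cap N)$ is a subgroup of $H$ properly containing $M$. Maximality of $M$ forces $M(H\cap N)=H$, from which
\[
MN \;=\; M(H\cap N)N \;=\; HN \;=\; G.
\]
But then $M$ is a proper subgroup of $H$ with $MN=G$, contradicting the minimal choice of $H$. Hence $H\cap N$ lies in every maximal subgroup of $H$, so $H\cap N\le\frat(H)$, as required.

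I do not foresee any real obstacle: the argument uses only that $G$ is finite, that $N\trianglelefteq G$ (to ensure $H\cap N\trianglelefteq H$ and thus that $M(H\cap N)$ is a subgroup), and the definition of the Frattini subgroup as the intersection of the maximal subgroups. The only point that needs a brief verification is that $H\cap N$ is normal in $H$, which is immediate from the normality of $N$ in $G$.
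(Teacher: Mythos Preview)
Your argument is correct and is essentially the same as the paper's: both take $H$ minimal with $HN=G$ and derive a contradiction from a proper subgroup $M$ of $H$ with $M(H\cap N)=H$, yielding $MN=G$. The paper adds a separate (redundant) remark for the case $N\le\frat(G)$, and you supply the harmless extra observation that $H\cap N\trianglelefteq H$; neither changes the substance of the proof.
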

\begin{proof} 
The lemma  clearly holds if $N \le \frat (G)$, with $H=G$. On the other hand, if $N$ is not a subgroup of $\frat(G)$, then there exists  a proper subgroup of $G$ supplementing $N$. Let $H$ be a subgroup of $G$ which is minimal with respect to the property  that $G=HN$. If $N \cap H$ is not contained in $\frat (H)$, then there exists a proper subgroup $M$ of $H$ such that $H=M(N \cap H)$. Thus $G=NH=MN$, against the minimality of $H$. 
\end{proof}

If $w$ is a multilinear commutator word and $N$ is a normal subgroup of  a group $G$ containing no nontrivial $w$-values, then $N$  centralizes $w(G)$ (see e.g. \cite[Theorem 2.3]{TS} or the comment after Lemma 4.1 in \cite{restricted_centr}). 

The next two lemmas deal with particular cases of Theorem \ref{thm:metanilp}. Clearly, if $G$ is perfect, then $G=w(G)$.

\begin{lem}\label{lem:simple}
Let $w$ be a multilinear commutator word and $G$ a  finite group  in which every metanilpotent subgroup generated by $w$-values has rank at most $r$. 
If $G/\frat(G)$ is a direct product of nonabelian simple groups, then the rank of $G$ is bounded  in terms of $r$ and $w$. 
\end{lem}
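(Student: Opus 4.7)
Set $F = \frat(G)$, so $G/F = \prod_{i=1}^{k} S_i$. The plan is to bound the rank of each Sylow subgroup of $G$ and then invoke the Guralnick--Lucchini bound. By the Ore conjecture \cite{lost} and an easy induction on the height of $w$, every element of each simple $S_i$ is a $w$-value; since $w$ is multilinear this passes to the direct product $G/F$, whence $w(G) F = G$ and the Frattini property forces $w(G) = G$.

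To bound $k$, I pick an involution $z'_i \in S_i$ for each $i$, let $z_i \in G/F$ be the element concentrated in the $i$-th factor, and lift $z_i$ to a $w$-value $\tilde z_i \in G$. The subgroup $H=\langle\tilde z_1,\dots,\tilde z_k\rangle$ has $H\cap F$ nilpotent and $H/(H\cap F) \le \langle z_1, \ldots, z_k\rangle \cong (\Z/2)^k$ elementary abelian, so $H$ is metanilpotent and generated by $w$-values; by hypothesis its rank is at most $r$, forcing $k \le d(H) \le r$. The same lifting trick applied to any nilpotent $w$-generated subgroup of $S_i$ verifies the hypothesis of Theorem~\ref{thm:sol} for the simple group $S_i$, yielding $\mathrm{rank}(S_i) = \mathrm{rank}(w(S_i)) \le r+1$. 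Together this bounds $\mathrm{rank}(G/F)$ in terms of $r$ and $w$.

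Now fix a Sylow $p$-subgroup $P$ of $G$ and set $L=PF$. Then $L$ is metanilpotent (hence soluble) and inherits the hypothesis, so Theorem~\ref{thm:sol} gives $\mathrm{rank}(w(L)) \le r+1$; Proposition~\ref{prop:focal} applied to $L$ yields $P\cap w(L)=\langle w\text{-values in }P\rangle$, a nilpotent $w$-generated subgroup of rank $\le r$ by hypothesis. To bound $\mathrm{rank}(P)$ itself I would also consider the subgroup $W\le L$ generated by all $w$-values of $G$ lying in $L$: since every element of $L/F = \prod_i Q_i$ is a $w$-value in $G/F$, we have $WF=L$, and $W$ is metanilpotent and $w$-generated, so $\mathrm{rank}(W)\le r$ by hypothesis. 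Combining this with Lemma~\ref{lem:H} applied to $P$ and $P\cap F$ to produce a suitable supplement, and then invoking Guralnick--Lucchini, gives the desired bound $\mathrm{rank}(G) \le \max_p \mathrm{rank}(P) + 1$.

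The hardest step is the last one: controlling the ``Frattini tail'' $P/(P\cap W)$, which embeds into the nilpotent quotient $F/(F\cap W)$ of the Frattini subgroup and whose rank is not obviously bounded by $r$. I expect this to require exploiting the module structure of $F_p/\frat(F_p)$ as a $G/F=\prod S_i$-module: decomposing into isotypic components and bounding the nontrivial ones via commutator-type $w$-values landing inside $F_p$ (which makes them genuinely $w$-generated and so of rank $\le r$), and bounding the trivial component via Schur-multiplier estimates for the simple factors, which, having bounded rank, have bounded order by the classification.
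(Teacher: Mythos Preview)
Your strategy matches the paper's for the first half: your subgroup $W=\langle G_w\cap PF\rangle$ is exactly the paper's $\langle Y\rangle$ with $Y=G_w\cap P\Phi$, and $WF=PF$ yields $\mathrm{rank}(P\Phi/\Phi)\le r$ just as in the paper. Your bound on the number $k$ of simple factors also agrees (the paper obtains $t\le r$ more directly, from $\mathrm{rank}(Q\Phi/\Phi)\le r$ for a Sylow $2$-subgroup $Q$, but your lifted-involutions argument is fine). The detours through Theorem~\ref{thm:sol} applied to $L$ and to the individual $S_i$, and through Proposition~\ref{prop:focal}, are not needed and do not help with the remaining gap.

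The gap you flag---controlling $P\cap F$---is real, but your proposed repair via the $G/F$-module structure of $F_p/\frat(F_p)$ is more elaborate than necessary (and the aside that the simple factors, ``having bounded rank, have bounded order'' is false: think of $\mathrm{PSL}_2(q)$). The paper closes the gap in one stroke. Set $N=\langle G_w\cap\Phi\rangle$. Then $N\le\Phi$ is nilpotent and generated by $w$-values, so $\mathrm{rank}(N)\le r$ by hypothesis. In $G/N$ the image of $\Phi$ contains no nontrivial $w$-value of $G/N$, and since $w$ is a multilinear commutator word this forces $\Phi/N$ to centralize $w(G/N)=G/N$; thus $\Phi/N$ is central in the perfect group $G/N$ and hence is a quotient of the Schur multiplier of $G/\Phi$, which by the classification has rank at most $2$ per simple factor, so at most $2k\le 2r$ in total. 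Now $P$ is filtered by $P\Phi/\Phi$, $\Phi/N$, and $N$, each of bounded rank, and Guralnick--Lucchini finishes. No module decomposition and no appeal to Lemma~\ref{lem:H} are required.
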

\begin{proof} 
Let $P$ be a Sylow $p$-subgroup of $G$ and  set $\Phi=\frat(G)$. As $P\Phi$ is metanilpotent, by assumption  the set $Y= G_w \cap P\Phi$  generates a subgroup of rank at most $r$. It follows from the  Ore conjecture  \cite{lost} that every element of $G/ \Phi$ is a $w$-value. Thus $P$ is contained in the set $Y\Phi$ and so $P\Phi/\Phi\le\langle Y\rangle\Phi/\Phi$ has rank at most $r$. 
 
First assume  that   $G/\Phi$ is a simple group. The subgroup $N$ generated by $G_w \cap \Phi$ is nilpotent, hence, by assumption, its rank is at most $r$. 
 Since the image of $\Phi$ in $G/N$ contains no nontrivial $w$-values of $G/N$, and $w$ is a multilinear commutator, $\Phi/N$ centralizes $w(G/N)=G/N$. So $\Phi/N $ is a quotient of the Schur multiplier of the simple group $G/\Phi$. A corollary of the classification of finite simple groups is that the rank of the Schur multiplier of any such group is at most $2$ (see for example \cite[Table 4.1]{gor-simple}). As $P\Phi/\Phi$, $\Phi/N $ and $N$ have bounded rank, we deduce that $P$ has bounded rank. Since this holds for every prime $p$, the aforementioned result of Guralnick \cite{gur} and Lucchini \cite{dp} implies that the rank of $G$ is bounded. 

Now assume that $G/\Phi=S_1\times \cdots \times S_t$ is  a direct product of  $t >1$ nonabelian simple groups $S_i$. Let $Q$ be a Sylow $2$-subgroup of $G$. By the above argument,  $Q\Phi/\Phi $ has rank at most $r$. Since  each $S_i$ has a nontrivial Sylow $2$-subgroup, we deduce that $t$ is at most $r$. Therefore the lemma follows from the case where $G/\Phi$ is simple. 
\end{proof}

\begin{lem}\label{lem:l=1}
Let $w$ be a multilinear commutator word and $G$ a perfect finite group such that $\lambda(G) =1$. Assume that every metanilpotent subgroup of $G$ generated by $w$-values has rank at most $r$. Then the rank of $G$ is bounded  in terms of $r$ and $w$. 
\end{lem}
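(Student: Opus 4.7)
The plan is to employ Lemma \ref{lem:H} to obtain a supplement $H$ to the soluble radical $S$, apply Lemma \ref{lem:simple} to bound the rank of $H$, and then bound the rank of $S$ using a Schur multiplier argument combined with the centralizer property for $w$-free normal subgroups.

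First let $S$ be the soluble radical of $G$. Since $G$ is perfect and $\lambda(G)=1$, the quotient $G/S$ is a nontrivial direct product of nonabelian finite simple groups $L_1\times\cdots\times L_t$. By Lemma \ref{lem:H} applied with $N=S$, there is a subgroup $H\le G$ with $G=HS$ and $H\cap S\le\frat(H)$. Since $G/S$ is a direct product of simple groups, $\frat(G/S)=1$; combining this with the standard inclusion $\frat(H)(H\cap S)/(H\cap S)\le\frat(H/(H\cap S))$ and the containment $H\cap S\le\frat(H)$, we obtain $\frat(H)=H\cap S$, so $H/\frat(H)\cong G/S$ is a direct product of nonabelian simple groups. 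Lemma \ref{lem:simple} applied to $H$ therefore gives a bound $d(H)\le c_1$ for some $c_1=c_1(r,w)$. For any subgroup $K\le G$ the short exact sequence $1\to K\cap S\to K\to KS/S\to 1$ gives $d(K)\le d(K\cap S)+d(KS/S)\le\mathrm{rank}(S)+\mathrm{rank}(G/S)\le\mathrm{rank}(S)+c_1$, so bounding the rank of $G$ reduces to bounding the rank of $S$.

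Set $N=\langle G_w\cap S\rangle$, a normal subgroup of $G$ contained in $S$. Since $G_w\cap S\subseteq N$, the quotient $S/N$ contains no nontrivial $w$-values, so by the centralizer property for $w$-free normal subgroups (recalled in the paragraph preceding Lemma \ref{lem:simple}), $S/N$ centralizes $w(G/N)$. As $G$ is perfect, $w(G)=G$ and hence $w(G/N)=G/N$; therefore $S/N$ is central in the perfect quotient $G/N$. Since $G/N$ is thus a perfect central extension of the centerless group $G/S$, the subgroup $S/N$ embeds in the Schur multiplier $M(G/S)=\prod_i M(L_i)$. By the classification of finite simple groups each $M(L_i)$ has rank at most $2$, and the number $t$ of simple factors is bounded by $\mathrm{rank}(G/S)\le c_1$; hence $\mathrm{rank}(S/N)\le 2c_1$.

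The main obstacle will be to bound the rank of $N$. The subgroup $N$ is soluble (being contained in $S$) and generated by $w$-values of $G$; since nilpotent subgroups of $N$ generated by $w$-values are also nilpotent subgroups of $G$ generated by $w$-values, Theorem \ref{thm:sol} applied to $N$ gives $\mathrm{rank}(w(N))\le r+1$. To pass from this to a bound on $\mathrm{rank}(N)$ itself one must control $\mathrm{rank}(N/w(N))$, a soluble group of derived length at most the height of $w$ but a priori of unbounded rank. I expect this last step is obtained either by iteratively applying the Schur multiplier reduction to successive $G$-invariant subgroups generated by $w$-values in the Fitting layers of $N$, or by an induction on the Fitting height of $S$ (bounded by Corollary \ref{cor:sol-fit}) that carefully tracks how the metanilpotent-plus-$w$-values hypothesis transfers to appropriate quotients of $G$.
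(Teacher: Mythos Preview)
Your argument is sound up through the bound on $\mathrm{rank}(S/N)$, but the final step---bounding $\mathrm{rank}(N)$---is a genuine gap, as you yourself recognise. Applying Theorem~\ref{thm:sol} to $N$ only controls $w(N)$, and the abelian layer $N/w(N)$ can be arbitrarily large under your setup (think of $N$ abelian, where $w(N)=1$). Your first suggested fix, iterating a Schur-multiplier reduction down through the Fitting layers of $N$, does not work: once you pass to a proper quotient of $G/N$ the top factor is no longer a direct product of simple groups, so the multiplier bound is unavailable.

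The paper resolves this by exactly your second suggestion, but the execution requires reorganising the whole proof. Instead of supplementing the full soluble radical $S$, one supplements only the Fitting subgroup $G_1=\mathrm{Fit}(G)$, and argues by induction on the (bounded) Fitting height of $S$. With $N=\langle G_w\cap G_1\rangle$ now \emph{nilpotent}, the hypothesis immediately gives $\mathrm{rank}(N)\le r$. The price is that the supplement $H$ no longer satisfies $H/\frat(H)\cong G/S$; rather $H/(H\cap G_1)\cong G/G_1$ still has $\lambda=1$ but a strictly shorter Fitting series, and one must first check that $H$ is itself perfect before invoking the inductive hypothesis on $H$. Finally, because at intermediate steps the central quotient $(G/N)/Z(G/N)$ need not be a product of simple groups, the Schur-multiplier bound must be replaced by the Lubotzky--Mann theorem (bounded rank of $K/Z(K)$ implies bounded rank of $K'$), which gives $\mathrm{rank}(G/N)$ bounded once $\mathrm{rank}(H)$ is.
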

\begin{proof} 
As $G$ is perfect  and $\lambda(G) =1$, the quotient group of $G$ over its soluble radical $R(G)$     is a direct product of nonabelian simple groups. Moreover, by Corollary \ref{cor:sol-fit},  the Fitting height of $R(G)$ is bounded in terms of $r$ and $w$.
 So  $G$ has a normal series  of bounded length 
\begin{equation}\label{eq:eq}
 1=G_{0} < G_1 < \cdots < G_{s-1} < G_{s}=G 
 \end{equation} 
 where $G/G_{s-1}$   is a direct product of nonabelian simple groups and each section $G_i/G_{i-1}$ is nilpotent, for $i=1, \dots , s-1$. We argue by induction on the minimal length $s$ of such a series. The case $s=1$ is handled in  Lemma \ref{lem:simple} so we assume that $s>1$.

Let $H$ be a subgroup of $G$ which is minimal with respect to the property that $G=HG_1$ and $H \cap G_1 \le \frat(H)$ (see Lemma \ref{lem:H}). Note that $H/ H \cap G_1$ is perfect since it is isomorphic to $G/G_1$. We have $H=H' (H \cap G_1)$, whence $H'G_1=HG_1=G$. We therefore conclude that $H$ is perfect, by minimality of $H$. Moreover, $\lambda(H)=\lambda(G)=1$. 

Consider the series of $H$ 
 \[  1 \le  G_1 \cap H \le  \cdots  \le G_{s-1}  \cap H \le G_{s}  \cap H= H. \] 
If $s>2$, then $(G_2 \cap H)/(G_1 \cap H )$ is nilpotent. Taking into account that $G_1\cap H\le\frat(H)$, deduce that $G_2 \cap H $ is nilpotent. By induction on the minimal length of a series as in (\ref{eq:eq}), $H$ has bounded rank.  
  
On the other hand, if $s=2$, then $H/\frat(H)$ is a homomorphic image of $G/G_1$, a direct product of nonabelian simple groups, and we can apply Lemma \ref{lem:simple} to conclude that also in this case $H$ has bounded rank.  
  
Now consider the subgroup $N=\langle G_w \cap G_1 \rangle$ generated by the $w$-values of $G$ lying in $G_1$. Since the image of $G_1$ in $G/N$ contains no nontrivial $w$-values of $G/N$, and $w$ is a multilinear commutator, $G_1/N$ centralizes $w(G/N)=G/N$. Set $K=G/N$ and note that $K/Z(K)$ is a homomorphic image of $H$. Therefore $K/Z(K)$ has bounded rank. A theorem of Lubotzky and Mann \cite{powerful} (see also \cite{rank}) now implies that the derived group $K'$ of $K$ has  bounded rank. Since  $G$ is perfect, we conclude that $G/N$ has bounded rank. 
   Finally, note that since $N$ is a nilpotent subgroup generated by $w$-values, it has rank at most $r$ by the hypothesis. Therefore $G$ has bounded rank, as claimed.
\end{proof}

Write $G^{(\infty)}$ for the last term of the derived series of $G.$  Set
$T_1(G)=G^{(\infty)}$   and, by  induction, $T_{i+1}(G)= T(T_{i}(G))$. In view of Proposition \ref{la=1}, it is clear that
 if $T_{i-1}(G) \neq 1$, then  $ T_i(G)$ is the unique smallest normal subgroup $N$ of $G$ such that $\lambda(G/N)=i-1$. Moreover, $\lambda(T_{i}(G)/T_{i+1}(G))=1$ and $T_{i}(G)$ is perfect   for every $i\geq1$ such that $T_i(G) \neq 1$.

\begin{proof}[{\textsc{of Theorem \ref{thm:metanilp}}}]%\renewenvironment{proof}[1][{}]{\begin{oldproof}[#1]}{\qed\end{oldproof}}
Recall that $w$ is a multilinear commutator word and $G$ a finite group in which every metanilpotent subgroup generated by $w$-values has rank at most $r$. We want to prove that the rank of the verbal subgroup $w(G)$ is bounded in terms of $r$ and $w$ only. By Corollary \ref{cor:non-sol-l}, the nonsoluble length  $\lambda=\lambda(G)$ of $G$ is bounded  in terms of $r$ and $w$. We argue by induction on $\lambda$. If $\lambda(G) =0$, then $G$ is soluble, and the result follows from Theorem \ref{thm:sol}. Assume that $\lambda \ge1$, and let $N=T_\lambda$. Note that $N$ is perfect. Moreover $\lambda(G/N)=\la-1$ and $\la(N)=1$. By Lemma \ref{lem:H}, there exists a subgroup $H$ of $G$ such that $G=HN$ and $H \cap N \le \frat (H)$. Since  $\lambda(H/H\cap N) \le \lambda(G/N)=\la-1$ and  
 $H \cap N \le \frat (H)$ is soluble, the nonsoluble length of $H$ is at most $\la-1$. 
 As $H$ inherits the assumptions, by induction $w(H)$ has bounded rank. 
% Moreover, $N$ is perfect, so $w(N)=N$. As 
  Moreover, as $N$ is perfect and 
 $\la(N)=1$,  we deduce from Lemma \ref{lem:l=1} that $N$ has bounded rank. 
Now $w(G)/w(G)\cap N$ is isomorphic to $w(G)N/N=w(H)N/N$, so it has bounded rank.  As also $N$ has  bounded rank, we conclude that $w(G)$ has bounded rank. 
\end{proof}

\end{document}